\documentclass[11pt]{article}
\usepackage[T1]{fontenc}
\usepackage{lmodern}
\usepackage{microtype}
\usepackage{geometry}
\usepackage{amsmath,amssymb,amsthm,mathtools}
\usepackage[shortlabels]{enumitem}
\usepackage{xcolor}
\usepackage[hidelinks]{hyperref}
\usepackage[nameinlink,noabbrev]{cleveref}

\allowdisplaybreaks

\newtheorem{theorem}{Theorem}[section]
\newtheorem{proposition}[theorem]{Proposition}
\newtheorem{lemma}[theorem]{Lemma}
\newtheorem{corollary}[theorem]{Corollary}
\theoremstyle{definition}

\theoremstyle{remark}
\newtheorem{remark}[theorem]{Remark}

\title{A Lang-Trotter Problem for Non-Geometric Quadratic Inductions}
\author{Haoyang Yuan\\Department of Mathematics, Nanjing University, Nanjing 210093, China\\zgqyhy@163.com}

\date{}

\begin{document}

\maketitle

\begin{abstract}
Let $K/\mathbb Q$ be an imaginary quadratic extension and $p$  an odd prime. Write $\rho=\operatorname{Ind}_{G_K}^{G_{\mathbb Q}}\chi$, where $E/\mathbb Q_p$ is a finite extension and  $\chi:G_K\to\mathcal O_E^\times$ is a continuous character. For a fixed $r\in\mathbb Z\setminus\{0\}$, let $\pi_{\rho,r}(X)$ denote the number of rational primes $\ell\le X$ such that $\rho$ is unramified at $\ell$ and $\operatorname{tr}\rho(\operatorname{Frob}_\ell)=r$. Let $a,b$ be the two weights of $\chi$ at $p$. We prove that if $(a,b)\notin\mathbb Q^2$, then $\pi_{\rho,r}(X)\ll_{\rho,r,\varepsilon}X^\varepsilon$ for every $\varepsilon>0$, while if $(a,b)\in\mathbb Q^2\setminus\mathbb Z^2$, then only finitely many such primes occur. These bounds are substantially sparser than the classical CM Lang--Trotter scale. The main input in the non-rational case is a rigidity theorem for algebraic curves in the \(p\)-adic analytic trace locus, combined with rigid-analytic Pila--Wilkie counting; the rational non-integral case is treated by a local ramification argument.
\end{abstract}

\textbf{2020 Mathematics Subject Classification.}
Primary 11F80; Secondary 11R37, 11R44, 14G22.

\textbf{Keywords:}
Lang--Trotter conjecture; fixed Frobenius traces; $p$-adic Galois representations; quadratic induction; rigid analytic geometry.

\section{Introduction}

Let $L/\mathbb Q_p$ be a finite extension, and let
\[
\rho:G_{\mathbb Q}\longrightarrow \mathrm{GL}_2(L)
\]
be a continuous $p$-adic representation. It is geometric in the sense of Fontaine-Mazur \cite{FontaineMazur1995} if it is unramified outside finitely many primes and de Rham at $p$. For $r\in L$ and $X\ge2$, put
\begin{equation}\label{eq:counting-function}
\pi_{\rho,r}(X)
=
\#\left\{
\ell\le X:
\rho\text{ is unramified at }\ell,
\ \operatorname{tr}\rho(\operatorname{Frob}_\ell)=r
\right\},
\end{equation}
where $\ell$ is a rational prime and $\operatorname{Frob}_\ell$ is arithmetic Frobenius.  For an elliptic curve $E/\mathbb Q$ of good reduction at $\ell$, write
\[
a_\ell(E)=\ell+1-\#E(\mathbb F_\ell).
\]
For an elliptic curve $E/\mathbb Q$ and a fixed integer $r$(with $r\neq 0$ if $E$ has CM), the Lang-Trotter conjecture predicts, when the corresponding constant $C_{E,r}$ is nonzero,
\begin{equation}\label{eq:lang-trotter}
\#\{\ell\le X:a_\ell(E)=r\}
\sim
C_{E,r}\frac{\sqrt X}{\log X}.
\end{equation}
Since $\pi(X)\sim X/\log X$, the conjecture predicts that primes with a fixed Frobenius trace have density zero, but occur on the characteristic scale $\sqrt X/\log X$.  This is the classical Lang-Trotter scale with which we compare our estimates below.  The conjecture originates in \cite{LangTrotter1976}; for a broad overview, see \cite{Katz2009}, and for representative upper-bound results, see \cite{Zywina2015,ThornerZaman2018}.

For CM elliptic curves, fixed Frobenius traces are naturally related to Hecke characters. Recent work on the connection between the Lang-Trotter conjecture and the Hardy-Littlewood conjecture includes the work of Qin \cite{Qin2016,Qin2025} on elliptic curves $y^2=x^3+D$ and $y^2=x^3+Dx$  and the work of Hu-Lei-Qin \cite{HuLeiQin} on other CM elliptic curves. For similar results and unconditional bounds, see Wan-Xi \cite{WanXi2026}.   

Pande \cite{pande2011} constructed infinitely ramified
two-dimensional $p$-adic Galois representations whose Frobenius-trace
distributions violate Lang-Trotter-type predictions.  By contrast,
the representations considered here are unramified outside finitely
many primes; their atypical fixed-trace behavior comes from the
non-de Rham weights at $p$.

We instead consider quadratic inductions of arbitrary continuous $p$-adic characters, without assuming local algebraicity at $p$.  The resulting representations need not be geometric. We study the fixed-trace problem for these non-geometric quadratic inductions.

More precisely, let $K/\mathbb Q$ be imaginary quadratic, let $p$ be
odd, let $E/\mathbb Q_p$ be a finite extension, and let
\[
\chi:G_K\longrightarrow\mathcal O_E^\times
\]
be continuous. Set
\[
\rho=\operatorname{Ind}_{G_K}^{G_{\mathbb Q}}\chi.
\]
After a finite extension of coefficients, let $a,b$ denote the two weights of $\chi$ at $p$, as defined in Section~\ref{sec:local}.

\begin{theorem}\label{thm:main}
Let $K/\mathbb Q$ be imaginary quadratic, $p$ an odd prime and $E/\mathbb Q_p$ a finite extension. Let 
\[
\chi:G_K\longrightarrow\mathcal O_E^\times
\]
be a continuous character.
Put
\[
\rho=\operatorname{Ind}_{G_K}^{G_{\mathbb Q}}\chi,
\]
and let $a,b$ be the weights of $\chi$ at $p$. Fix $r\in\mathbb Z\setminus\{0\}$.
\begin{enumerate}[(i)]
\item If $(a,b)\notin\mathbb Q^2$, then, for every $\varepsilon>0$,
\[
\pi_{\rho,r}(X)\ll_{\rho,r,\varepsilon}X^\varepsilon.
\]
\item If $(a,b)\in\mathbb Q^2\setminus\mathbb Z^2$, then
\[
\pi_{\rho,r}(X)=O_{\rho,r}(1).
\]
\end{enumerate}
\end{theorem}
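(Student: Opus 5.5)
The plan is to reduce the fixed-trace condition to a statement about values of $\chi$ at Frobenius elements of split primes, and then exploit the $p$-adic analytic structure of $\chi$ on ideals of $K$.

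\textbf{Reduction to split primes.} If $\ell$ is inert in $K$ and $\rho$ is unramified at $\ell$, then $\operatorname{Frob}_\ell\notin G_K$, so $\operatorname{tr}\rho(\operatorname{Frob}_\ell)=0\neq r$. Since $r\neq0$, only split primes $\ell=\lambda\bar\lambda$ contribute. For these the condition reads
\[
\chi(\operatorname{Frob}_\lambda)+\chi(\operatorname{Frob}_{\bar\lambda})=r.
\]
By class field theory, $\chi$ corresponds to a continuous character of $\mathbb A_K^\times/K^\times$. Pass to a finite-index subgroup: fix $h$ with $\lambda^{h}=(\alpha)$ principal, and choose $\alpha\equiv1$ modulo a suitable power of the conductor away from $p$. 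Then
\[
\chi(\operatorname{Frob}_\lambda)^{h}=\psi(\alpha),
\]
where $\psi:(\mathcal O_K\otimes\mathbb Z_p)^\times\to \mathcal O_E^\times$ is the restriction to $p$-adic units, up to a finite-order twist. Here $p$ is odd, and $\alpha$ can be taken in a pro-$p$ open subgroup after a finite partition into residue classes. On that subgroup, after extending coefficients, $\psi$ has the form $\psi(x)=\exp(a\log\sigma_1 x+b\log\sigma_2 x)$ with the two embeddings $\sigma_i$; this is the definition of the weights $a,b$ from Section~\ref{sec:local}. Since $\bar\alpha$ generates $\bar\lambda^h$, the fixed-trace condition becomes a condition on the pair $(\sigma_1\alpha,\sigma_2\alpha)$:
\[
u+v=r,\qquad u^h=\zeta\,\exp(a\log x+b\log y),\quad v^h=\zeta'\exp(b\log x+a\log y),
\]
with $(x,y)=(\sigma_1\alpha,\sigma_2\alpha)$, roots of unity $\zeta,\zeta'$ from finitely many classes, and $u,v$ among $h$-th roots. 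Note that $x$ and $y$ are algebraic; in fact $xy=\ell^h$ is rational.

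\textbf{Case (ii).} Suppose $a,b\in\mathbb Q$ but not both integers. Choose $N$ with $Na,Nb\in\mathbb Z$. Then $\chi^N$ is locally algebraic at $p$, so $\chi^N(\operatorname{Frob}_\lambda)$ is an algebraic number in a fixed number field. The values $u,v$ are then algebraic, with $u^N+\cdots$ constrained by the equation $u+v=r$, where $u^N=\epsilon\,\alpha^{Na}\bar\alpha^{Nb}$ and $\epsilon$ ranges over finitely many values. The idea is the following. The field $\mathbb Q(u)$ is obtained by adjoining an $N$-th root of $\alpha^{Na}\bar\alpha^{Nb}$. Local ramification at $\lambda$ shows that this root is genuinely ramified: the $\lambda$-adic valuation $Na$ (up to scaling) is not divisible by $N$, because $a\notin\mathbb Z$ (or $b\notin\mathbb Z$, symmetrically). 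On the other hand, $u=r-v$, and $v$ has $\lambda$-valuation governed by $b$. Comparing $\lambda$-adic valuations of $u$ and $r-v$ forces either valuations that are integers, a contradiction, or $\lambda\mid r$, which happens for only finitely many $\lambda$. I expect this to yield $O(1)$ primes. The delicate point is handling the case where both valuations are positive or cancel. For that I would use $uv=\det\rho(\operatorname{Frob}_\ell)$, which has valuation $h(a+b)$-type at $\lambda$ and at $\bar\lambda$, together with the integrality of $u,v$.

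\textbf{Case (i).} The locus
\[
Z=\{(x,y):\ \exp(\tfrac{1}{h}(a\log x+b\log y))\zeta_1+\exp(\tfrac1h(b\log x+a\log y))\zeta_2=r\}
\]
is a rigid-analytic (non-algebraic) curve in a polydisc. The points to count are $(\sigma_1\alpha,\sigma_2\alpha)$ with $\alpha\in\mathcal O_K$ of height $\ll X^{h}$, i.e. rational points of a quadratic field of polynomial height. Apply the rigid-analytic Pila--Wilkie theorem (Cluckers--Comte--Loeser), assumed available from earlier in the paper. It bounds the points off the algebraic part by $\ll X^{\varepsilon}$. It then remains to show that the algebraic part is essentially empty: any algebraic curve contained in $Z$ forces $(a,b)\in\mathbb Q^2$. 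This is the rigidity theorem. Its proof would run through Ax--Schanuel-type reasoning: along an algebraic curve, $x^{a/h}y^{b/h}$ and $x^{b/h}y^{a/h}$ satisfy a linear relation, which by Ax's theorem forces multiplicative dependence, i.e. rational exponents. Points on the finitely many exceptional algebraic pieces, such as a line where $x$ is constant, give $O(1)$ primes. I expect the rigidity step, namely the precise Ax-type argument in the $p$-adic setting, to be the main obstacle.
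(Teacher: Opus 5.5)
Your architecture matches the paper's: only split primes contribute, $\chi$ has a $p$-adic analytic formula on generators, non-rational weights are handled by Pila--Wilkie plus a rigidity theorem, and rational weights by a local argument at $\lambda$. However, each case has a genuine gap.

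In case (ii), comparing valuations is inconclusive exactly in the case that survives. Take $\nu$ extending the $\lambda$-adic valuation, normalized to have value group $\mathbb Z$ on the completion $L_{\mathfrak P}$ of a number field $L$ that contains the relevant algebraic data and is unramified at $\ell$. Then $\nu(u)=a$, $\nu(v)=b$, $\nu(r)=0$. The ultrametric inequality for $u+v=r$ eliminates everything except $a=b$, or, up to symmetry, $b=0<a$ with $a\notin\mathbb Z$. In the latter case $\nu(v)=\nu(r)=0$, so nothing about valuations prevents $\nu(r-v)=a$. The obstruction is cancellation, not $\lambda\mid r$. Your fallback via $uv=\det\rho(\operatorname{Frob}_\ell)$ and integrality gives nothing here either, since $\nu(uv)=a$ and $u,v$ are already $\lambda$-integral. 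The missing step is this: a fixed power $v^n$ with $\ell\nmid n$ is a $\lambda$-unit, so $v$ lies in an unramified extension of $L_{\mathfrak P}$ (\cref{lem:unit-root-unramified}). Hence so does $u=r-v$, which forces $\nu(u)\in\mathbb Z$, a contradiction. The equal-weight case, where $u$ and $v$ can cancel when $a<0$, needs a global rather than valuative argument. There $u=\eta\,\ell^{a}$ and $v=\eta'\ell^{a}$ with $\eta,\eta'$ in a finite set, so $u+v=r$ leaves finitely many values of $\ell^{a}$, hence of $\ell$. This is the paper's identity $r^N=(A+B)^N(\gamma\ell)^M$.

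In case (i), the rigidity statement you aim for is false as stated. If $a=b\notin\mathbb Q$ the trace condition depends only on $xy$, and if $a=-b\notin\mathbb Q$ only on $x/y$. So the locus can contain entire hyperbolas $xy=q_0$ or lines $x=t_0y$ (not lines with $x$ constant, which never lie in it). These form exactly the algebraic part that Pila--Wilkie cannot bound, so you must show they carry only finitely many Frobenius primes. For $xy=q_0$ this is immediate, since $xy$ determines $\ell$. For $x/y=t_0$, note that $\alpha/\bar\alpha$ is then fixed, so $\alpha\in\mathbb Q\beta$ for a fixed $\beta$; then $(\alpha)=\lambda^h$ forces $\lambda$ or $\bar\lambda$ into the finite support of $(\beta)$ (compare \cref{prop:degenerate-finite}).

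Ax--Schanuel also does not give rationality in one step. It only yields a $\mathbb Q$-linear dependence modulo constants among $\log x$, $\log y$, $a\log x+b\log y$, $b\log x+a\log y$ on the curve, and this dependence holds identically when $b=a+1$, so further casework is needed. \cref{thm:curve-rigidity} avoids this: with $q=xy$ and $t=x/y$, logarithmic differentials show that $t^{a-b}$ and $q^{a+b}$ agree with rational functions on the curve, and orders at a zero or pole of $t$ and of $q$ give $a\pm b\in\mathbb Q$.

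Finally, the Pila--Wilkie theorems count $\mathbb Q$-points of $\mathbb Q_p$-analytic sets. You should therefore count the lattice coordinates of $\alpha$ in $\mathbb Z^2$ and split the $E$-valued equation into $\mathbb Q_p$-components, as in \cref{lem:BK-compatibility}, rather than counting the $K$-points $(\sigma_1\alpha,\sigma_2\alpha)$.
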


\begin{remark}[The case $r=0$]\label{rem:r-zero}
The restriction $r\ne0$ is essential.  If $\ell$ is unramified for $\rho$ and inert in $K$, then the character of the index-two induced representation vanishes on the non-trivial coset of $G_K$, and hence
\[
\operatorname{tr}\rho(\operatorname{Frob}_\ell)=0.
\]
After removing the fixed finite ramification set, the inert primes have density $1/2$ by the Chebotarev density theorem; see, for example, \cite[Chapter~VII]{Neukirch1999}.  Consequently
\[
\pi_{\rho,0}(X)\gg_{K,\rho}\frac{X}{\log X},
\]
so neither conclusion of \cref{thm:main} can hold for $r=0$ in general.
\end{remark}

All implied constants may depend on the fixed representation, $r$, and $\varepsilon$; finite exceptional sets are absorbed into $O_{\rho,r}(1)$.  In the diagonal and anti-diagonal non-rational cases, \cref{cor:diagonal-antidiagonal} strengthens the first conclusion to finiteness.

The contrast with the classical Lang-Trotter conjecture is immediate.

\begin{corollary}\label{cor:nongeometric}
Under the hypotheses of \cref{thm:main}, the representation $\rho$ is
unramified outside finitely many primes. Moreover,
\[
\rho\text{ is de Rham at }p
\quad\Longleftrightarrow\quad
a,b\in\mathbb Z.
\]
Consequently, $\rho$ is geometric in the sense of Fontaine-Mazur if
and only if $a,b\in\mathbb Z$. Therefore, if $\rho$ is non-geometric,
then, for every $\varepsilon>0$,
\[
\pi_{\rho,r}(X)\ll_{\rho,r,\varepsilon}X^\varepsilon.
\]
In particular,
\[
\pi_{\rho,r}(X)
=o\!\left(\frac{\sqrt X}{\log X}\right).
\]
\end{corollary}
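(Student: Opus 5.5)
The corollary has four parts: finite ramification, the de Rham criterion, the resulting geometricity criterion, and the counting bound. I would treat them in that order.

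\textbf{Finite ramification.} Since $\chi:G_K\to\mathcal O_E^\times$ is continuous, it factors through $G_K^{\mathrm{ab}}$. By class field theory it corresponds to a continuous character of $\mathbb A_K^\times/K^\times$, and this character is trivial on $\overline{K_\infty^\times}$. Take $v\nmid p$. The image of the inertia subgroup $\mathcal O_{K_v}^\times$ is a pro-$\ell$-by-finite group, where $\ell$ is the residue characteristic of $v$. The target $\mathcal O_E^\times$ is virtually pro-$p$. So $\chi|_{\mathcal O_{K_v}^\times}$ has finite image, and that image is trivial once it lands in $1+\mathfrak m_E$ with $\ell\ne p$. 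Hence the restriction factors through the torsion $\mu(E)$. To get triviality for all but finitely many $v$, I would choose an open subgroup $U_p\subset\mathcal O_{K,p}^\times$ on which $\chi$ lands in the pro-$p$ group $1+p\mathcal O_E$. The quotient $\mathbb A_K^\times/(K^\times K_\infty^\times U_p)$ is then profinite with prime-to-$p$ contributions from almost all $v$. An open neighbourhood $V$ of $1$ in $\mathcal O_E^\times$ contains no nontrivial finite subgroup, and by continuity $\chi^{-1}(V)$ contains $\prod_{v\notin S}\mathcal O_{K_v}^\times$ for some finite set $S$. Therefore $\chi$ is unramified outside $S\cup\{p\}$. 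The induced representation $\rho$ is then unramified outside the primes below $S$, $p$, and the primes dividing $\mathrm{disc}(K)$.

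\textbf{De Rham criterion.} Being de Rham at $p$ is insensitive to restricting to an open subgroup. Since $\rho|_{G_K}=\chi\oplus\chi^c$, the question reduces to the local characters $\chi|_{G_{K_w}}$ for $w\mid p$ (and their conjugates). I will use the definition of the weights $a,b$ from Section~\ref{sec:local}. These are presumably the Sen weights of $\chi$ at the embeddings above $p$: the weights of the local characters at the places above $p$ when $p$ splits, or the two Sen weights indexed by the two embeddings $K_p\hookrightarrow \overline{\mathbb Q}_p$ when $p$ is inert or ramified. A $p$-adic character of a local Galois group is de Rham if and only if it is Hodge--Tate with integer weights. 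By Sen/Tate theory for abelian representations, a character whose Sen weights are integers is, after twisting by an algebraic character (a product of Lubin--Tate characters with those integer exponents), a character with all Sen weights zero. Such a character is potentially unramified and hence de Rham. Conversely, a de Rham character has integral Hodge--Tate weights. So $\rho$ is de Rham at $p$ exactly when $a,b\in\mathbb Z$. Combining this with the finite-ramification step gives the geometricity statement.

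\textbf{Counting bound.} If $\rho$ is non-geometric, then $(a,b)\notin\mathbb Z^2$. Either $(a,b)\notin\mathbb Q^2$, and \cref{thm:main}(i) gives $\pi_{\rho,r}(X)\ll_\varepsilon X^\varepsilon$; or $(a,b)\in\mathbb Q^2\setminus\mathbb Z^2$, and \cref{thm:main}(ii) gives $O(1)$, which is also $\ll X^\varepsilon$. Taking $\varepsilon=1/4$ then gives $\pi_{\rho,r}(X)=o(\sqrt X/\log X)$.

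\textbf{Main obstacle.} I expect the de Rham equivalence to be the main point, specifically matching the paper's definition of the weights $a,b$ to the Hodge--Tate--Sen weights. The key fact is that for abelian characters, integral Sen weights force the de Rham property; this is Tate--Sen theory together with local class field theory. I would invoke it as a standard result.
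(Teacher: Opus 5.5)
Your proposal is correct and follows the paper's route. The paper's proof quotes the de Rham criterion \eqref{eq:derham-integral}, which Section~\ref{sec:local} obtains by restricting to the decomposition groups above $p$ and applying the Tate--Conrad local-algebraicity criterion; you instead use Sen's finite-inertia theorem after a Lubin--Tate twist. It then splits $(a,b)\notin\mathbb Z^2$ into the two cases of \cref{thm:main} and takes $0<\varepsilon<1/2$, exactly as you do. The paper states finite ramification (\cref{lem:finite-ramification}) without proof, so your idelic continuity argument is an addition. There you should say that $V$ is chosen sufficiently small, e.g.\ inside $1+p\mathcal O_E$, which is torsion-free since $p$ is odd, so that it indeed contains no nontrivial finite subgroup.
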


For non-rational weights, we combine a rigidity result for algebraic curves in the trace locus with $p$-adic Pila-Wilkie counting after a finite ray-class parametrization of the contributing split primes. For rational non-integral weights, a local ramification argument gives finiteness. The imaginary quadratic hypothesis gives the required square-root height bound through the positive-definite norm form.

Section~\ref{sec:local} records the local preliminaries, Section~\ref{sec:parametrization} gives the finite parametrization of the contributing split Frobenius primes, and Sections~\ref{sec:rigidity}-\ref{sec:rational} prove the main results. At the end, we give some remarks on the theorem.

\section{Local weights}\label{sec:local}

Throughout, $p$ is an odd prime and $E/\mathbb Q_p$ is finite.  We first recall some basic facts about number fields and $p$-adic Hodge theory.

\begin{lemma}\label{lem:finite-ramification}
Let $K$ be a number field and let
\[
\chi:G_K\longrightarrow\mathcal O_E^\times
\]
be a continuous character.  Then $\chi$ is unramified outside a finite set of primes of $K$.

Consequently,
\[
\operatorname{Ind}_{G_K}^{G_{\mathbb Q}}\chi
\]
is unramified outside finitely many rational primes.
\end{lemma}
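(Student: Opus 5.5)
The plan is to pass through class field theory and use that $\mathcal O_E^\times$ has an open pro-$p$ subgroup, which rules out wild ramification away from $p$. By continuity, $\chi$ factors through $G_K^{\mathrm{ab}}$, and via the Artin map we may view it as a continuous character
\[
\chi_{\mathbb A}:\mathbb A_K^\times/\overline{K^\times K_{\infty,+}^\times}\longrightarrow \mathcal O_E^\times .
\]
Fix the open subgroup $U=1+\varpi\mathcal O_E$; since $p$ is odd, we may take $U$ to be torsion-free pro-$p$ (enlarging $\varpi$-exponent if $E$ is ramified so that $\log$ is injective on $U$). Its preimage under $\chi_{\mathbb A}$ is open. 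It therefore contains $\prod_{v\notin S}\mathcal O_v^\times$ for some finite set $S$ of places, which we may take to contain all places above $p$ and all archimedean places.

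The key step is to upgrade this to triviality on $\mathcal O_v^\times$ for almost all $v$. For a finite place $v\notin S$ with residue characteristic $\ell\neq p$, we have $\mathcal O_v^\times\cong \mu_{q_v-1}\times(1+\mathfrak m_v)$, and $1+\mathfrak m_v$ is pro-$\ell$. Its image under $\chi$ is a pro-$\ell$ subgroup of the pro-$p$ group $U$, hence trivial. The group $\mu_{q_v-1}$ lands in $U$, which is torsion-free, so it is killed as well. Thus $\chi$ is trivial on $\mathcal O_v^\times$, i.e.\ unramified at $v$, for all $v\notin S$.

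A possible snag is that this argument only constrains $v$ with $\chi(\mathcal O_v^\times)\subseteq U$, and the openness argument gives exactly that for $v\notin S$. So $S$ is the finite set we need, and no further work is required; the main care point is the choice of $U$ torsion-free, which is where $p$ odd is convenient. One could instead argue directly on the Galois side: the image $\chi(G_K)$ is an abelian $p$-adic Lie group, so a finite-index subgroup is pro-$p$, and wild-inertia/tame-inertia considerations give the same conclusion. The idelic route is cleaner, however.

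For the consequence, the restriction of $\operatorname{Ind}_{G_K}^{G_{\mathbb Q}}\chi$ to $G_K$ is $\chi\oplus\chi^c$. Its kernel on $G_{\mathbb Q}$ contains the intersection of $\ker\chi$ and its conjugate within $G_K$. Hence $\rho$ is unramified at every rational prime $\ell$ that is unramified in $K$ and lies below no place of $S$ or its conjugate. Indeed, the inertia group $I_\ell\subset G_K$ at such $\ell$ coincides with inertia at places of $K$ above $\ell$, and these act trivially via both $\chi$ and $\chi^c$. This excludes only finitely many $\ell$.
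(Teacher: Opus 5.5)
The paper gives no proof of this lemma: it is recalled as a standard fact. The only related argument in the text is the local remark right after it, namely that for $\ell\ne p$ a continuous character $L^\times\to\mathcal O_E^\times$ has finite image on $\mathcal O_L^\times$.

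Your proof of the first assertion is correct, and it supplies what that local remark leaves out. Finiteness of $\chi(\mathcal O_v^\times)$ at each $v$ does not by itself give triviality for almost all $v$. The global input is your openness step, which places $\chi(\mathcal O_v^\times)$ inside a single torsion-free pro-$p$ group $U$ for all $v\notin S$. The pro-$p$ property kills the image of $1+\mathfrak m_v$, and torsion-freeness, not merely pro-$p$-ness, kills the image of $\mu_{q_v-1}$. You rightly note that $U=1+\varpi^k\mathcal O_E$ needs $k>e(E/\mathbb Q_p)/(p-1)$; with that choice the parity of $p$ plays no role.

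Your Galois-side alternative also works and avoids class field theory. Let $K'$ be the fixed field of $\chi^{-1}(U)$. At each $v\nmid p$ of $K'$, wild inertia maps trivially. A tame generator $\sigma$ satisfies $\chi(\sigma)^{q_v-1}=1$, by the Frobenius relation and commutativity of the target, so $\chi(\sigma)=1$ in $U$. Hence $\chi$ is unramified away from $p$ and the finitely many places where $K'/K$ ramifies. Your sketch of this route should make the torsion-freeness and the finiteness of the ramification of $K'/K$ explicit.

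One point needs repair in the second part. The statement allows an arbitrary number field $K$, but writing $\rho|_{G_K}=\chi\oplus\chi^c$ uses $[K:\mathbb Q]=2$. That is the only case the paper uses, and the general case is no harder:
\begin{enumerate}[(1)]
\item One has $\ker\rho=\bigcap_{g\in G_{\mathbb Q}}g(\ker\chi)g^{-1}$.
\item If $\ell$ is unramified in $K$, hence in its Galois closure, every inertia group $I_w$ at a place $w\mid\ell$ of $\overline{\mathbb Q}$ lies in $G_K$. It is then the inertia group of $G_K$ at $w$.
\item Therefore $g^{-1}I_wg=I_{g^{-1}w}\subseteq\ker\chi$ for every $g$ as soon as no place of $K$ above $\ell$ lies in $S$.
\end{enumerate}
This excludes only finitely many $\ell$.
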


We also use the standard local fact that, for $\ell\ne p$, a continuous character
\[
L^\times\longrightarrow\mathcal O_E^\times,
\qquad L/\mathbb Q_\ell\text{ finite},
\]
has finite image on $\mathcal O_L^\times$.  This follows immediately from the decomposition of $\mathcal O_L^\times$ into a finite group and a pro-$\ell$ group.

Let $L$ be a finite-dimensional commutative \'etale $\mathbb Q_p$-algebra and let $\mathcal O_L$ be its maximal compact subring.  By the $p$-adic logarithm, every continuous character
\[
\lambda:L^\times\longrightarrow\mathcal O_E^\times
\]
has, on a sufficiently deep principal-unit subgroup, a unique expression
\begin{equation}\label{eq:analytic-character}
\lambda(u)=\exp\bigl(-\kappa(\log u)\bigr),
\end{equation}
where $\kappa\in\operatorname{Hom}_{\mathbb Q_p}(L,E)$.  This is the usual weight formalism for continuous $p$-adic characters.  In the notation of Kedlaya-Pottharst-Xiao \cite[Definition~6.1.21]{KedlayaPottharstXiao2014}, applied factorwise when $L$ is \'etale, $\kappa$ is the weight of $\lambda^{-1}$, or equivalently the weight of $\lambda$ is $-\kappa$.  Lemma~6.1.23 of \cite{KedlayaPottharstXiao2014} identifies these character weights with the corresponding Sen weights for rank-one $(\varphi,\Gamma)$-modules.  We always choose the depth so that the logarithm and exponential occurring here are defined and injective.

We now specialize to the quadratic case.  We normalize local Artin reciprocity so that a uniformizer maps to arithmetic Frobenius, and use the compatible global Artin map for which principal id\`eles map to the identity.

Let $K/\mathbb Q$ be quadratic, let $c$ be its non-trivial automorphism, and put $K_p=K\otimes_{\mathbb Q}\mathbb Q_p$.  For each $\mathfrak p\mid p$, let
\[
\lambda_{\mathfrak p}:K_{\mathfrak p}^\times\longrightarrow\mathcal O_E^\times
\]
be the local character attached to $\chi|_{G_{K_{\mathfrak p}}}$, and let
\[
\lambda_p:K_p^\times\longrightarrow\mathcal O_E^\times
\]
be their product.  Write $\kappa\in\operatorname{Hom}_{\mathbb Q_p}(K_p,E)$ for the functional in \eqref{eq:analytic-character}.  After replacing $E$ by a finite extension if necessary, fix an embedding
\[
\tau:K\hookrightarrow E\subset\overline{\mathbb Q}_p
\]
and write $\tau^c=\tau\circ c$ for the other embedding.  There are unique $a,b\in E$ such that
\begin{equation}\label{eq:weight-decomposition}
\kappa=a\tau+b\tau^c.
\end{equation}
Equivalently, in the notation of Kedlaya-Pottharst-Xiao,
\[
a=\operatorname{wt}_{\tau}(\lambda_p^{-1}),
\qquad
b=\operatorname{wt}_{\tau^c}(\lambda_p^{-1}).
\]
We refer simply to $a,b$ as the two weights of $\chi$ at $p$.

Reordering the two embeddings swaps $a$ and $b$, and further extension of the coefficient field does not change them.  Thus the conditions
\[
(a,b)\in\mathbb Q^2,
\qquad
(a,b)\in\mathbb Z^2,
\qquad
(a,b)\notin\mathbb Q^2
\]
are intrinsic.

We also use the standard local algebraicity criterion for one-dimensional $p$-adic representations.  If $\delta:G_L\to\mathcal O_F^\times$ is continuous and, after passage to a splitting field, its local character has the form
\[
\lambda_\delta(u)
=
\exp\left(-\sum_{\tau:L\hookrightarrow F^{\mathrm{spl}}}
 k_\tau\,\tau(\log u)\right)
\]
on sufficiently deep principal units, then $\delta$ is locally algebraic if and only if every $k_\tau$ is an integer; equivalently, the underlying $\mathbb Q_p$-representation is Hodge-Tate, or de Rham.  We use Tate's theorem together with Conrad's formulation of locally algebraic characters, including the descent of the integral exponent vector; see \cite[Definition~B.1 and Lemma~B.2]{Conrad2011} and \cite[Lemma~4.1]{Conrad2011}.  De Rhamness is preserved and detected after finite extension of $p$-adic fields \cite[Proposition~6.3.8]{BrinonConrad2009}.  To pass from $\chi$ to the induced representation, restrict locally to the decomposition group or groups above $p$.  After the corresponding finite extension, the restriction of
\[
\rho=\operatorname{Ind}_{G_K}^{G_{\mathbb Q}}\chi
\]
is the direct sum of the local character attached to $\chi$ and its conjugate.  Since de Rham representations are stable under finite direct sums and direct summands, $\rho$ is de Rham at $p$ if and only if the two local rank-one characters are de Rham.  The preceding local algebraicity criterion therefore gives
\begin{equation}\label{eq:derham-integral}
\rho\text{ is de Rham at }p
\quad\Longleftrightarrow\quad
a,b\in\mathbb Z.
\end{equation}

\section{Finite parametrization of split Frobenius primes}\label{sec:parametrization}

From now on, $K/\mathbb Q$ is imaginary quadratic, $c$ denotes complex conjugation, and
\[
\rho=\operatorname{Ind}_{G_K}^{G_{\mathbb Q}}\chi
\]
with $\chi$ a continuous $p$-adic character.  Its ramification set is finite by \cref{lem:finite-ramification}.  No splitting condition on $p$ in $K$ is imposed.

We now introduce the ray-class quotient used to parametrize the split Frobenius primes; we use the standard ideal and id\`ele formulations of class field theory as in \cite[Chapters~VI-VII]{Neukirch1999}.

Let $\Psi$ be the multiplicative character on fractional ideals prime to the ramification set of $\chi$, defined at an unramified prime ideal $\mathfrak q$ by
\[
\Psi(\mathfrak q)=\chi(\operatorname{Frob}_{\mathfrak q}).
\]
Using the standard local-unit fact recalled in Section~\ref{sec:local}, choose an integral modulus $\mathfrak m$, prime to $p$, such that
\begin{enumerate}[(a)]
\item $\overline{\mathfrak m}=\mathfrak m$;
\item every prime away from $p$ at which $\chi$ ramifies divides $\mathfrak m$;
\item for every $v\mid\mathfrak m$, the local character is trivial on $1+\mathfrak m\mathcal O_{K_v}$.
\end{enumerate}
The conjugation stability in (a) is needed when the conjugate prime is treated.

Let $I_{\mathfrak m}^{(p)}$ be the group of fractional ideals prime to $p\mathfrak m$, and let $P_{\mathfrak m}^{(p)}$ be the subgroup of principal ideals $(\alpha)$ satisfying
\[
\alpha\equiv1\pmod{\mathfrak m},
\qquad
v_{\mathfrak p}(\alpha)=0\quad(\mathfrak p\mid p).
\]
If $\alpha\in K^\times$ satisfies $\alpha\equiv1\pmod{\mathfrak m},
v_{\mathfrak p}(\alpha)=0(\mathfrak p\mid p)$, then $(\alpha)$ is prime to $p\mathfrak m$.

The quotient
\[
I_{\mathfrak m}^{(p)}/P_{\mathfrak m}^{(p)}
\]
is finite, since it injects into the ordinary ray class group modulo $\mathfrak m$.

Choose a finite list of fractional-ideal representatives
\[
\mathfrak a_1,\ldots,\mathfrak a_J
\]
prime to $p\mathfrak m$, containing a representative of every class and stable under conjugation.

For an element $\alpha\in K^\times$ satisfying the two conditions
above, global reciprocity gives the following relation between the
ideal character and the local character at $p$. 
\[
1=\prod_v\chi(\operatorname{Art}_v(\alpha))
  =\Psi((\alpha))\lambda_p(\alpha).
\]
Indeed, the factors at $v\mid\mathfrak m$ are trivial, the unramified factors away from $p\mathfrak m$ give $\Psi((\alpha))$, and the factors above $p$ give $\lambda_p(\alpha)$.  Hence
\begin{equation}\label{eq:principal-reciprocity}
\Psi((\alpha))=\lambda_p(\alpha)^{-1}.
\end{equation}

We next study how the split primes contributing to the fixed-trace condition can be organized into finitely many arithmetic families.  Since the trace of an index-two induction vanishes at inert primes, the assumption $r\ne0$ allows us, outside a fixed finite exceptional set, to consider only primes
\[
\ell\mathcal O_K=\mathfrak l\overline{\mathfrak l}
\]
that split in $K$.  The class of $\mathfrak l$ in the finite quotient above is represented by some $\mathfrak a_j$, and therefore
\[
\mathfrak l\mathfrak a_j^{-1}=(\alpha)
\]
for an element $\alpha\in K^\times$ satisfying
\[
\alpha\equiv1\pmod{\mathfrak m},
\qquad
v_{\mathfrak p}(\alpha)=0\quad(\mathfrak p\mid p).
\]
We call such an $\alpha$ an \emph{admissible generator}.  For this representative choose a $\mathbb Z$-basis $\omega_1,\omega_2$ of $\mathfrak a_j^{-1}$, so that
\[
\alpha=m\omega_1+n\omega_2,
\qquad m,n\in\mathbb Z.
\]

We now refine this finite ray-class decomposition at $p$.  Choose $h$ sufficiently large that the logarithm and all analytic powers occurring below are defined on
\[
1+p^h(\mathcal O_K\otimes\mathbb Z_p).
\]
The quotient
\[
(\mathcal O_K\otimes\mathbb Z_p)^\times/
\bigl(1+p^h(\mathcal O_K\otimes\mathbb Z_p)\bigr)
\]
is finite.  On each nonempty residue class of admissible generators, choose one element $\alpha_0$.  If
\[
\alpha_0=m_0\omega_1+n_0\omega_2,
\]
then every generator in the same class satisfies
\[
(m,n)\in(m_0,n_0)+p^h\mathbb Z^2,
\]
and
\[
z=\alpha/\alpha_0\in1+p^h(\mathcal O_K\otimes\mathbb Z_p).
\]
Indeed,
$\mathfrak a_j^{-1}\otimes\mathbb Z_p
=\mathbb Z_p\omega_1\oplus\mathbb Z_p\omega_2
=\mathcal O_K\otimes\mathbb Z_p$,
and $\alpha_0$ is a $p$-adic unit; hence the coordinate differences lie in $p^h\mathbb Z_p$, and being integers they lie in $p^h\mathbb Z$.

Fix an embedding $\iota_p:\overline{\mathbb Q}\hookrightarrow\overline{\mathbb Q}_p$ extending $\tau|_K$, and put
\[
x=\tau(z),
\qquad
y=\tau^c(z).
\]
Then $x$ and $y$ lie in sufficiently deep principal-unit discs.  By \eqref{eq:principal-reciprocity} and the weight decomposition,
\[
\Psi(\mathfrak l)
=\Psi(\mathfrak a_j)\lambda_p(\alpha_0)^{-1}x^ay^b.
\]
Applying the same formula to the conjugate ideal, and using $\tau\circ c=\tau^c$ and $\tau^c\circ c=\tau$, gives
\[
\Psi(\overline{\mathfrak l})
=\Psi(\overline{\mathfrak a_j})\lambda_p(c(\alpha_0))^{-1}x^by^a.
\]
Thus, with
\begin{equation}\label{eq:AB-param-explicit}
A=\Psi(\mathfrak a_j)\lambda_p(\alpha_0)^{-1}, B=\Psi(\overline{\mathfrak a_j})\lambda_p(c(\alpha_0))^{-1},
\end{equation}
the formula of the trace at split prime ideals becomes
\begin{equation}\label{eq:trace-normal-form}
\operatorname{tr}\rho(\operatorname{Frob}_\ell)
=Ax^ay^b+Bx^by^a.
\end{equation}
 Taking norms in $\mathfrak l\mathfrak a_j^{-1}=(\alpha)$ and using $xy=N_{K/\mathbb Q}(z)$ gives
\begin{equation}\label{eq:norm-relation}
xy=\frac{\ell}{N(\mathfrak a_j)N_{K/\mathbb Q}(\alpha_0)}=\gamma\ell,
\qquad
\gamma:=\frac{1}{N(\mathfrak a_j)N_{K/\mathbb Q}(\alpha_0)}\in\mathbb Q_{>0}.
\end{equation}
Since the norm form of an imaginary quadratic field is positive definite, the fixed lattice basis gives
\begin{equation}\label{eq:height-bound}
|m|+|n|\ll\sqrt\ell.
\end{equation}
Finally, the two embeddings of $K$ give independent linear forms in $(m,n)$, so after scalar extension the map $(m,n)\mapsto(x,y)$ is invertible.

There are only finitely many choices of a ray-class representative $\mathfrak a_j$ and a nonempty admissible $p$-adic residue class of generators.  We call each such pair a \emph{parametrization family}.  

Throughout the remainder of the paper, a fixed finite exceptional set may be enlarged without further notice.  It contains $p$, the rational primes below the ramification sets of $K$ and $\chi$, the primes below the modulus $\mathfrak m$, and the primes below the supports of the finitely many fixed ideal representatives and normalization constants.  Such enlargements do not affect any asserted asymptotic estimate or finiteness statement.

For later use, the preceding construction may be summarized as follows.

\begin{proposition}\label{prop:split-parametrization}
Outside a fixed finite set, every unramified prime $\ell$ with
\[
\operatorname{tr}\rho(\operatorname{Frob}_\ell)=r\ne0
\]
belongs to one of the finitely many parametrization families above. Whenever a family is fixed, all auxiliary choices and constants attached to it are understood to be fixed as well.  If $\alpha=m\omega_1+n\omega_2$ is the associated generator and
\[
z=\alpha/\alpha_0,\qquad x=\tau(z),\qquad y=\tau^c(z),
\]
then
\begin{equation}\label{eq:parametrization-summary}
\operatorname{tr}\rho(\operatorname{Frob}_\ell)
=Ax^ay^b+Bx^by^a,
\qquad
xy=\gamma\ell,
\qquad
|m|+|n|\ll\sqrt\ell,
\end{equation}
where the implied constant depends only on the fixed family.  Moreover
\[
(m,n)\in(m_0,n_0)+p^h\mathbb Z^2.
\]
\end{proposition}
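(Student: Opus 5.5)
The plan is to assemble the construction preceding the statement into a proof, checking each step in order.

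\emph{Reduction to split primes.} Enlarge the exceptional set to contain $p$, the primes ramified in $K$, the primes below the ramification set of $\chi$ and below $\mathfrak m$, and the primes dividing the norms of the finitely many $\mathfrak a_j$ and of the chosen $\alpha_0$. For an unramified $\ell$ outside this set, the trace of the index-two induction vanishes on the non-trivial coset of $G_K$. Since $r\neq0$, such an $\ell$ cannot be inert, so it splits as $\ell\mathcal O_K=\mathfrak l\overline{\mathfrak l}$. Mackey's formula then gives
\[
\operatorname{tr}\rho(\operatorname{Frob}_\ell)=\chi(\operatorname{Frob}_{\mathfrak l})+\chi(\operatorname{Frob}_{\overline{\mathfrak l}})=\Psi(\mathfrak l)+\Psi(\overline{\mathfrak l}).
\]

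\emph{Parametrization.} Since $\mathfrak l$ is prime to $p\mathfrak m$, its class in the finite quotient $I_{\mathfrak m}^{(p)}/P_{\mathfrak m}^{(p)}$ is that of some $\mathfrak a_j$. Hence $\mathfrak l\mathfrak a_j^{-1}=(\alpha)$ with $\alpha$ admissible, and $\alpha$ lies in $\mathfrak a_j^{-1}$ because $\mathfrak l$ is integral. This justifies writing $\alpha=m\omega_1+n\omega_2$ with $m,n\in\mathbb Z$. Its residue class in $(\mathcal O_K\otimes\mathbb Z_p)^\times/(1+p^h)$ is one of the finitely many nonempty classes, so we fix the corresponding $\alpha_0$. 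The argument already given shows $(m,n)\in(m_0,n_0)+p^h\mathbb Z^2$ and $z\in1+p^h(\mathcal O_K\otimes\mathbb Z_p)$.

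\emph{Trace formula.} Apply \eqref{eq:principal-reciprocity} to $\alpha$ and to $\alpha_0$, both admissible, and use multiplicativity: $\Psi(\mathfrak l)=\Psi(\mathfrak a_j)\lambda_p(\alpha_0)^{-1}\lambda_p(z)^{-1}$. Since $z$ is a deep principal unit, \eqref{eq:analytic-character} and \eqref{eq:weight-decomposition} give $\lambda_p(z)^{-1}=\exp(a\log x+b\log y)=x^ay^b$, with analytic powers defined by the choice of $h$. For $\overline{\mathfrak l}$, use the conjugation-stable list of representatives and conditions (a) and (c). Then $\overline{\mathfrak l}\,\overline{\mathfrak a_j}^{-1}=(c\alpha)$ with $c\alpha$ admissible, and $c(z)=c\alpha/c\alpha_0$. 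Because $\tau(c z)=y$ and $\tau^c(cz)=x$, we get $\Psi(\overline{\mathfrak l})=B\,x^by^a$. This is the step I expect to need the most care: we must ensure $\overline{\mathfrak a_j}$ is legitimately used as the representative, which the conjugation stability of the list provides, and that the exponent identity uses the same branch of $\log$ and $\exp$.

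\emph{Norm and height.} Taking norms, $N(\mathfrak l)N(\mathfrak a_j)^{-1}=N_{K/\mathbb Q}(\alpha)=N_{K/\mathbb Q}(\alpha_0)\,xy$, which gives $xy=\gamma\ell$. Finally, $N_{K/\mathbb Q}(m\omega_1+n\omega_2)$ is a positive definite binary quadratic form $Q(m,n)$ with rational coefficients, since $K$ is imaginary. Hence $Q(m,n)\ge c(m^2+n^2)$ for some $c>0$ depending on the family. Combined with $Q(m,n)=\ell/N(\mathfrak a_j)$, this gives $|m|+|n|\ll\sqrt\ell$.

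Finiteness of the families follows because both the index $j$ and the residue class at $p$ range over finite sets.
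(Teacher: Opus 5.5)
Your proposal is correct and follows essentially the same route as the paper's construction. Both use the same sequence of steps: reduction to split primes via $r\ne0$, a ray-class representative $\mathfrak a_j$ plus a $p$-adic residue class $\alpha_0$, principal reciprocity applied to $\alpha$ and $c\alpha$, the norm relation, and positive-definiteness for $|m|+|n|\ll\sqrt\ell$. One minor remark: $\Psi(\overline{\mathfrak a_j})$ is legitimate by multiplicativity of $\Psi$ on ideals prime to $p\mathfrak m$. The condition $\overline{\mathfrak m}=\mathfrak m$ is what makes $c\alpha$ admissible; conjugation stability of the list of representatives is not needed for this step.
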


In the point-counting argument we use the lattice coordinates $(m,n)\in\mathbb Z^2$; by \eqref{eq:height-bound}, primes $\ell\le X$ give height $O(\sqrt X)$ within each fixed family.

We next extend each integral congruence class to a full $p$-adic bidisc.  In other words, we regard the arithmetic coordinates $x=\tau(z)$ and $y=\tau^c(z)$ as analytic functions of $p$-adic parameters $(u,v)$.

\begin{lemma}\label{lem:full-bidisc}
Fix a parametrization family.  The congruence class
\[
(m,n)\in(m_0,n_0)+p^h\mathbb Z^2
\]
is parametrized by $\mathbb Z^2\subset\mathbb Z_p^2$ through
\[
m=m_0+p^hu,
\qquad
n=n_0+p^hv.
\]
For $(u,v)\in\mathbb Z_p^2$, define
\[
\alpha(u,v)
=(m_0+p^hu)\omega_1+(n_0+p^hv)\omega_2
\in\mathfrak a_j^{-1}\otimes\mathbb Z_p=\mathcal O_K\otimes\mathbb Z_p
\]
and
\[
z(u,v)=\frac{\alpha(u,v)}{\alpha_0}.
\]
Then:
\begin{enumerate}[(i)]
\item $z(u,v)\in1+p^h(\mathcal O_K\otimes\mathbb Z_p)$ for every $(u,v)\in\mathbb Z_p^2$;
\item the functions
\[
x(u,v)=\tau(z(u,v)),
\qquad
y(u,v)=\tau^c(z(u,v))
\]
are affine-linear restricted functions.  Writing $\Xi$ for the coefficient matrix of their linear parts, one has
\begin{equation}\label{eq:xy-affine-bidisc}
\begin{pmatrix}
x(u,v)-1\\[2pt]
y(u,v)-1
\end{pmatrix}
=p^h\Xi
\begin{pmatrix}u\\ v\end{pmatrix},
\qquad
\Xi\in M_2(\mathcal O_E),\quad \det\Xi\ne0;
\end{equation}
\item the pulled-back trace function
\begin{equation}\label{eq:param-analytic-function}
f(u,v)=Ax(u,v)^ay(u,v)^b+Bx(u,v)^by(u,v)^a-r
\end{equation}
belongs to $E\langle u,v\rangle$;
\item every prime in the fixed family satisfying $\operatorname{tr}\rho(\operatorname{Frob}_\ell)=r$ determines an integral point $(u,v)\in\mathbb Z^2$ at which $f(u,v)=0$.
\end{enumerate}
Thus \eqref{eq:param-analytic-function} analytically interpolates the arithmetic trace equation of the fixed family on the full affinoid $\operatorname{Sp}E\langle u,v\rangle$.
\end{lemma}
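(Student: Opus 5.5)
The proof is essentially bookkeeping, and I would take the four items in order.

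For (i), I would start from the identity
\[
\alpha(u,v)-\alpha_0=p^h\bigl(u\omega_1+v\omega_2\bigr)+\bigl((m_0+p^hu)-m_0\bigr)\omega_1-p^hu\omega_1+\dots,
\]
which more simply reads $\alpha(u,v)=\alpha_0+p^h(u\omega_1+v\omega_2)$. Since $\omega_1,\omega_2$ form a $\mathbb Z_p$-basis of $\mathfrak a_j^{-1}\otimes\mathbb Z_p=\mathcal O_K\otimes\mathbb Z_p$, we get $u\omega_1+v\omega_2\in\mathcal O_K\otimes\mathbb Z_p$. Because $\alpha_0$ is a unit of $\mathcal O_K\otimes\mathbb Z_p$ (admissibility gives $v_{\mathfrak p}(\alpha_0)=0$ for all $\mathfrak p\mid p$), dividing yields
\[
z(u,v)=1+p^h\alpha_0^{-1}(u\omega_1+v\omega_2)\in 1+p^h(\mathcal O_K\otimes\mathbb Z_p).
\]

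For (ii), I would apply the ring homomorphisms $\tau,\tau^c:\mathcal O_K\otimes\mathbb Z_p\to\mathcal O_E$. This gives
\[
x-1=p^h\bigl(\tau(\alpha_0^{-1}\omega_1)u+\tau(\alpha_0^{-1}\omega_2)v\bigr),
\]
and the same formula with $\tau^c$ for $y-1$. Hence $\Xi$ has rows $(\tau(\alpha_0^{-1}\omega_i))_i$ and $(\tau^c(\alpha_0^{-1}\omega_i))_i$, with entries in $\mathcal O_E$. Its determinant is
\[
\tau(\alpha_0^{-1})\tau^c(\alpha_0^{-1})\cdot\det\bigl(\tau(\omega_i),\tau^c(\omega_i)\bigr).
\]
The last factor squared is the discriminant of the lattice $\mathfrak a_j^{-1}$, which is nonzero because $K/\mathbb Q$ is separable. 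So $\det\Xi\ne0$, and $x,y$ are affine-linear polynomials, in particular restricted power series.

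For (iii), which is the only real point, I would write
\[
x^a=\exp\bigl(a\log x\bigr),\qquad \log x=\sum_{k\ge1}(-1)^{k+1}(x-1)^k/k.
\]
Since $x-1\in p^h\mathcal O_E\langle u,v\rangle$ with sup-norm at most $|p|^h$, the series $\log x$ converges in $E\langle u,v\rangle$ and has norm at most $|p|^h$. By the choice of $h$ (taken large relative to the finitely many exponents $a,b$, which are fixed elements of $E$), the quantity $a\log x$ has norm below $p^{-1/(p-1)}$. Hence the exponential series converges in the Banach algebra $E\langle u,v\rangle$, so $x^a,y^b,x^b,y^a\in E\langle u,v\rangle$. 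The same holds for products and for the $E$-linear combination $f$. The possible obstacle is making sure that $h$ was chosen deep enough for this convergence uniformly on the closed bidisc. This is exactly the meaning of ``all analytic powers are defined on $1+p^h(\mathcal O_K\otimes\mathbb Z_p)$'', and if needed I would enlarge $h$ once and for all in terms of $|a|,|b|$.

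For (iv), I would take a prime $\ell$ in the family. \Cref{prop:split-parametrization} gives $(m,n)\in(m_0,n_0)+p^h\mathbb Z^2$, so $u=(m-m_0)/p^h$ and $v=(n-n_0)/p^h$ are integers. At this point $z(u,v)=\alpha/\alpha_0$ is the arithmetic $z$. I also need the analytic power $x^a$ defined above to coincide with the one used in \eqref{eq:trace-normal-form}, namely $\lambda_p(z)=\exp(-\kappa(\log z))$ evaluated via \eqref{eq:analytic-character} and \eqref{eq:weight-decomposition}. This holds because $\exp(a\log x)$ evaluated at a point equals the evaluated series, since evaluation is a continuous ring homomorphism $E\langle u,v\rangle\to E$. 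Then \eqref{eq:trace-normal-form} gives $f(u,v)=\operatorname{tr}\rho(\operatorname{Frob}_\ell)-r=0$.
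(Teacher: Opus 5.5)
Your proposal is correct and follows the paper's proof essentially step for step: the identity $z(u,v)=1+p^h\alpha_0^{-1}(u\omega_1+v\omega_2)$ for (i), the same matrix $\Xi$ with entries $\tau(\omega_i/\alpha_0)$ and $\tau^c(\omega_i/\alpha_0)$ for (ii), convergence of $\log$ and then $\exp$ in $E\langle u,v\rangle$ via Gauss norms and the choice of $h$ for (iii), and specialization at the integral point for (iv), where your remark that evaluation commutes with the convergent series is a welcome extra detail. The only cosmetic difference is that you get $\det\Xi\neq0$ from the nonvanishing discriminant of the basis $\omega_1,\omega_2$ of $\mathfrak a_j^{-1}$, while the paper cites the isomorphism $K_p\otimes_{\mathbb Q_p}E\cong E\times E$; these express the same fact (and the garbled first display in (i) should simply be deleted).
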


\begin{proof}
By construction,
\[
\alpha_0=m_0\omega_1+n_0\omega_2,
\qquad
\mathfrak a_j^{-1}\otimes\mathbb Z_p=\mathcal O_K\otimes\mathbb Z_p,
\]
and $\alpha_0$ is a unit in $\mathcal O_K\otimes\mathbb Z_p$.  Hence
\[
z(u,v)
=1+p^h\left(
 u\frac{\omega_1}{\alpha_0}
 +v\frac{\omega_2}{\alpha_0}
\right),
\]
which proves (i).  Applying $\tau$ and $\tau^c$ gives \eqref{eq:xy-affine-bidisc} with
\[
\Xi=
\begin{pmatrix}
\tau(\omega_1/\alpha_0)&\tau(\omega_2/\alpha_0)\\
\tau^c(\omega_1/\alpha_0)&\tau^c(\omega_2/\alpha_0)
\end{pmatrix}.
\]
Its determinant is nonzero because
\[
K_p\otimes_{\mathbb Q_p}E
\longrightarrow E\times E,
\qquad
z\longmapsto(\tau(z),\tau^c(z))
\]
is an isomorphism of split \'etale $E$-algebras.  This proves (ii) and shows that $(u,v)\mapsto(x(u,v),y(u,v))$ extends to an affine-linear automorphism of the ambient affine plane.

For (iii), the Gauss norms of $x-1$ and $y-1$ are strictly smaller than $1$, so $\log x$ and $\log y$ converge in $E\langle u,v\rangle$.  The depth $h$ was chosen so that $a\log x$, $b\log x$, $a\log y$, and $b\log y$ lie in the convergence radius of the $p$-adic exponential.  Hence $x^a,x^b,y^a,y^b$, and therefore $f$, belong to $E\langle u,v\rangle$.  The same restricted series continue to converge after any finite complete scalar extension.

Finally, a global generator $\alpha$ arising from a Frobenius prime in the fixed family has
\[
m=m_0+p^hu,
\qquad
n=n_0+p^hv
\]
with $u,v\in\mathbb Z$.  At such a point $\alpha(u,v)=\alpha$, and \eqref{eq:trace-normal-form} gives $f(u,v)=0$ whenever the Frobenius trace equals $r$.  This proves (iv).
\end{proof}

\section{Algebraic curves in the trace locus and rigidity}\label{sec:rigidity}

The point-counting argument below separates an affinoid trace locus into algebraic and transcendental parts.  After passing from the lattice coordinates to the $(x,y)$-coordinates of the preceding section, a positive-dimensional algebraic piece gives an algebraic curve whose analytification contains a nonempty admissible open on which the trace equation holds. 

Fix a parametrization family.  Let
\[
\varphi:\mathbb A_E^2\longrightarrow\mathbb A_E^2,
\qquad
(u,v)\longmapsto\bigl(x(u,v),y(u,v)\bigr)
\]
denote the affine-linear coordinate map supplied by \cref{lem:full-bidisc}.
For every complete extension $F/E$, write
\[
\mathbb B_F^2=\operatorname{Sp}F\langle u,v\rangle
\]
for the closed unit bidisc in the $(u,v)$-coordinates.  Let
\[
\mathbb D_{x,y}=\operatorname{Sp}E\left\langle \frac{x-1}{p^h},\frac{y-1}{p^h}\right\rangle,
\]
the closed product bidisc with $E$-points $(1+p^h\mathcal O_E)^2$; write $\mathbb D_{x,y,F}$ for its base change to a complete extension $F/E$.  The depth $h$ is chosen so that all analytic powers below are defined there.  By \eqref{eq:xy-affine-bidisc},
\[
\varphi(\mathbb B_E^2)\subset\mathbb D_{x,y}.
\]
Moreover, since $\det\Xi\ne0$, there is an integer $c_0\ge0$ with $p^{c_0}\Xi^{-1}\in M_2(\mathcal O_E)$, and hence
\[
(1+p^{h+c_0}\mathcal O_E)^2\subset\varphi(\mathbb B_E^2).
\]
The fixed-trace equation in the $(x,y)$-coordinates is the zero locus of the analytic function
\begin{equation}\label{eq:trace-function}
\Phi(x,y)=Ax^ay^b+Bx^by^a-r.
\end{equation}

We use the affinoid identity theorem and one-variable Weierstrass
preparation in the forms given in \cite{BGR1984}, together with
Conrad's compatibility of irreducible components with analytification
\cite[Theorem~2.3.1]{Conrad1999}.

Note that $\Phi \not\equiv 0 $ on $\mathbb D_{x,y}$ if $(a,b)\neq (0,0)$.  Indeed, put $d=a-b$.  If $d\ne0$, restrict
to the analytic subdisc $xy=1$.  With $t=x/y$ and $Z=t^{d/2}$, the function
$Z$ is nonconstant because
\[
\frac{\mathrm dZ}{Z}=\frac d2\frac{\mathrm dt}{t}\ne0,
\]
whereas an identity $\Phi=0$ would give
\[
AZ^2-rZ+B=0.
\]
After factoring this quadratic over a finite coefficient extension, the
integral-domain property of the one-variable Tate algebra would force $Z$ to
be constant, a contradiction.  If $d=0$, then $a=b\ne0$ and
\[
\Phi=(A+B)(xy)^a-r,
\]
which is either the nonzero constant $-r$ or a nonconstant analytic function.

The following rigidity statement is the main geometric input.

\begin{theorem}\label{thm:curve-rigidity}
Assume $r\ne0$ and $(a,b)\ne(0,0)$.  Let $L/E$ be finite, let $C\subset\mathbb A^2_L$ be an irreducible algebraic curve, and suppose that $\Phi$ vanishes on a nonempty admissible open
\[
\Omega\subset C^{\mathrm{an}}\cap\mathbb D_{x,y,L}.
\]
Then at least one of the following holds:
\begin{enumerate}[(i)]
\item $a,b\in\mathbb Q$;
\item $a+b=0$ and the rational function $x/y$ is constant on $C$;
\item $a-b=0$ and the rational function $xy$ is constant on $C$.
\end{enumerate}
\end{theorem}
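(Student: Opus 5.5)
Suppose $a,b$ are not both rational; we show (ii) or (iii) holds. The plan is to work at a smooth point of $C$ lying in $\Omega$, parametrize $C$ locally by a one-variable analytic uniformizer, and exploit the fact that $x,y$ restricted to $C$ are algebraic functions while $x^a,y^b$ are "exponential" in $\log x,\log y$. Concretely, choose a point $P\in\Omega$ that is smooth on $C$ and is not a zero or pole of the relevant differentials. Using the implicit function theorem (one-variable Weierstrass preparation), choose a local coordinate $t$ on a small disc $D\subset\Omega$ around $P$ with $x=x(t)$, $y=y(t)$ power series whose coefficients are algebraic over $L(P)$ as elements of the function field $L(C)$. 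On $D$ we have the identity
\[
A e^{a\log x+b\log y}+B e^{b\log x+a\log y}=r .
\]
Put $U=\log x$, $V=\log y$, $S=aU+bV$, $T=bU+aV$. Note that $S+T=(a+b)\log(xy)$ and $S-T=(a-b)\log(x/y)$.

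\textbf{Step 1 (reduce to a transcendence statement).} If $x/y$ is constant on $C$ and $a\neq b$, then $S-T$ is constant. In that case $e^{S}$ and $e^{T}$ are proportional, so $\Phi=0$ forces $(xy)^{(a+b)/2}$ to be constant. Since $xy$ is nonconstant on $C$, this gives $a+b=0$, which is (ii). Symmetrically, $xy$ constant leads to (iii) or $a=b$. So we may assume that both $xy$ and $x/y$ are nonconstant rational functions on $C$, and we aim to derive $a,b\in\mathbb Q$.

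\textbf{Step 2 (differentiate).} Put $X_1=e^{S}$ and $X_2=e^{T}$, so that $AX_1+BX_2=r$. Differentiating in $t$ gives $A S' X_1+B T' X_2=0$. Since $r\neq 0$, solving this $2\times 2$ system yields
\[
X_1=\frac{rT'}{A(T'-S')},\qquad X_2=\frac{-rS'}{B(T'-S')},
\]
where $S'=a\,x'/x+b\,y'/y$ and $T'=b\,x'/x+a\,y'/y$ are rational functions on $C$ (with coefficients in $L$). The degenerate case $S'=T'$ means $(a-b)(\log(x/y))'=0$, which is excluded by Step 1 unless $a=b$, and that case is handled directly. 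Hence $e^{S}=g_1$ and $e^{T}=g_2$ with $g_1,g_2\in L(C)$. Taking logarithmic derivatives gives
\[
a\,\frac{dx}{x}+b\,\frac{dy}{y}=\frac{dg_1}{g_1}
\]
as differentials on $C$, and similarly with $a,b$ swapped.

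\textbf{Step 3 (residues force rationality).} This is the main step. The differentials $dx/x$, $dy/y$, $dg_1/g_1$ are logarithmic differentials of rational functions on the smooth projective model $\tilde C$. Take residues at every point of $\tilde C(\overline L)$: this gives $a\,\operatorname{ord}_Q(x)+b\,\operatorname{ord}_Q(y)=\operatorname{ord}_Q(g_1)\in\mathbb Z$, and likewise $b\,\operatorname{ord}_Q(x)+a\,\operatorname{ord}_Q(y)\in\mathbb Z$. Consider the divisor vectors $(\operatorname{div}x,\operatorname{div}y)$. If these are $\mathbb Q$-linearly independent, we can choose points realizing an invertible integer matrix, and solving the linear system gives $a,b\in\mathbb Q$.

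Otherwise, there is a relation $k\operatorname{div}x=l\operatorname{div}y$ with $(k,l)\neq(0,0)$ in $\mathbb Z^2$. Then $x^k y^{-l}$ is constant, so $x^k=cy^l$ on $C$. In this multiplicatively dependent case we substitute this relation back into the identity and use that $x$ is nonconstant to compare residues again. A nonzero pole of $x$ exists: $x$ is nonconstant, and the disc is only local. Without loss of generality $\operatorname{ord}_Q(x)\neq 0$ at some $Q$. Since $\operatorname{ord}(y)=(k/l)\operatorname{ord}(x)$ is forced, the residue conditions give $a+b(k/l)\in\mathbb Q$ and $b+a(k/l)\in\mathbb Q$. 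These two relations determine $a,b$ rationally unless $k/l=\pm1$. The cases $k/l=\pm1$ are exactly the cases $x/y$ constant and $xy$ constant, which were excluded in Step 1. (If $l=0$, then $x$ is constant, and the argument is symmetric.)

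I expect the delicate points to be two. The first is justifying that the locally analytic functions $e^{S}$ agree with global rational functions, and that the residue computation is legitimate even though $a,b$ live in $L$ rather than in $\mathbb Z$. Here the residue identity is purely algebraic once Step 2 is in hand. The second is the careful bookkeeping of the degenerate cases in Step 2.
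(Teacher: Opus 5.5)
Your argument is correct and is essentially the paper's: differentiating the trace identity once and solving the resulting $2\times2$ system shows that $x^ay^b$ and $x^by^a$ agree with rational functions on an open set. Their ratio and product are the paper's $t^d$ and $q^s$. Residues of logarithmic differentials then force rational exponents. Your divisor case analysis in Step 3 could be shortened, as in the paper, by taking residues of $dg_1/g_1 - dg_2/g_2 = d\,\mathrm{d}t/t$ at a zero or pole of $x/y$, and of $dg_1/g_1+dg_2/g_2=s\,\mathrm{d}q/q$ at a zero or pole of $xy$.

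The one step to write out is the constant-product case in Step 1, which is not literally symmetric to the constant-ratio case. There $e^Se^T=c$ is constant, so $e^S$ satisfies the fixed quadratic $AZ^2-rZ+Bc=0$ with $A\neq0$. Hence $e^S$ is constant on the disc, so $(a-b)\,\mathrm{d}x/x=0$ and $a=b$.
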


Here and below, a rational function on an integral curve over $L$ is called \emph{constant} if it is algebraic over $L$ inside the function field; in characteristic zero this is equivalent to the vanishing of its differential over $L$.

\begin{proof}
Let $\widetilde C$ be the smooth projective normalization of the
projective closure of $C$.  After shrinking $\Omega$, we may assume
that it lies in the smooth locus of $C^{\mathrm{an}}$, and hence view
it as a nonempty admissible open of $\widetilde C^{\mathrm{an}}$.
The coordinate functions $x,y$ then define nonzero rational functions
on $\widetilde C$.

Put $q=xy,\qquad t=x/y,\qquad s=a+b,\qquad d=a-b.$

Thus $q,t\in L(\widetilde C)^\times$.  On $\Omega$,
\[
x^ay^b=q^{s/2}t^{d/2},
\qquad
x^by^a=q^{s/2}t^{-d/2},
\]
and the trace equation becomes
\begin{equation}\label{eq:analytic-trace-qt}
q^{s/2}\bigl(At^{d/2}+Bt^{-d/2}\bigr)=r.
\end{equation}

We first dispose of the cases $s=0$ and $d=0$.  Suppose that $s=0$.
Then $d\ne0$, and \eqref{eq:analytic-trace-qt} reduces to
\[
At^{d/2}+Bt^{-d/2}=r.
\]
Logarithmic differentiation gives
\[
d\,\frac{B-At^d}{B+At^d}\frac{\mathrm dt}{t}=0
\]
on $\Omega$.  The denominator is nowhere zero there, since $r\ne0$.
If $B-At^d$ is not identically zero, then $\mathrm dt$ vanishes on a
nonempty admissible subopen and hence vanishes identically on
$\widetilde C$.  If $B-At^d$ is identically zero, differentiating
$t^d=B/A$ gives the same conclusion.  Thus $t$ is constant, and
alternative~(ii) holds.

Suppose next that $d=0$.  Then $s\ne0$, and
\[
(A+B)q^{s/2}=r.
\]
Since $r\ne0$, we have $A+B\ne0$, and logarithmic differentiation
gives $\mathrm dq=0$.  Hence $q$ is constant, giving~(iii).

We may therefore assume that $s,d\ne0$.  If $t$ were constant, then
the logarithmic differential of \eqref{eq:analytic-trace-qt} would
give
\[
s\frac{\mathrm dq}{q}=0,
\]
so $q$ would also be constant.  Since $x^2=qt, y^2=q/t,$ both $x$ and $y$ would then be constant, contradicting the fact that
$C$ is a curve.  Similarly, if $q$ were constant, then
\[
d\,\frac{B-At^d}{B+At^d}\frac{\mathrm dt}{t}=0.
\]
As above, this forces $t$ to be constant, again a contradiction.
Thus both $q$ and $t$ are nonconstant.

Since $\Omega^1_{L(\widetilde C)/L}$ is one-dimensional over
$L(\widetilde C)$ and $\mathrm dt\ne0$, there is
\[
\theta\in L(\widetilde C)
\]
such that
\[
\frac{\mathrm dq}{q}
=
\theta\frac{\mathrm dt}{t}.
\]
Taking the logarithmic differential of
\eqref{eq:analytic-trace-qt} gives
\[
s\frac{\mathrm dq}{q}
=
d\,\frac{B-At^d}{B+At^d}\frac{\mathrm dt}{t},
\]
and hence
\[
s\theta
=
d\,\frac{B-At^d}{B+At^d}.
\]
Solving for $t^d$, we obtain
\begin{equation}\label{eq:td-rational}
t^d
=
\frac{B(d-s\theta)}{A(d+s\theta)}
\end{equation}
on a nonempty admissible subopen of $\Omega$.  The denominator cannot
vanish identically, since $d+s\theta=0$ together with the preceding
identity would imply $2B=0$.  Thus
\[
R:=
\frac{B(d-s\theta)}{A(d+s\theta)}
\in L(\widetilde C)^\times
\]
is a nonzero rational function agreeing with $t^d$ on a nonempty
admissible open.  Consequently
\[
\frac{\mathrm dR}{R}
=
d\frac{\mathrm dt}{t}
\]
there, and hence on $\widetilde C$ as an identity of rational
differentials.

Choose a zero or pole $P_t$ of the nonconstant rational function $t$.
Taking residues at $P_t$ gives
\[
\operatorname{ord}_{P_t}(R)
=
d\,\operatorname{ord}_{P_t}(t).
\]
Since $\operatorname{ord}_{P_t}(t)\ne0$, it follows that
\[
d=
\frac{\operatorname{ord}_{P_t}(R)}
     {\operatorname{ord}_{P_t}(t)}
\in\mathbb Q.
\]

Moreover, $AR+B$ is not identically zero.  Otherwise $R$ would be
constant, and the identity
\[
\frac{\mathrm dR}{R}
=
d\frac{\mathrm dt}{t}
\]
would contradict $d\ne0$ and the nonconstancy of $t$.  We may
therefore define
\[
S=
\frac{r^2R}{(AR+B)^2}
\in L(\widetilde C)^\times.
\]
Using \eqref{eq:analytic-trace-qt} and $R=t^d$, we find that
\[
S=q^s
\]
on a nonempty admissible open.  Hence
\[
\frac{\mathrm dS}{S}
=
s\frac{\mathrm dq}{q}
\]
as rational differentials on $\widetilde C$.  Taking residues at a
zero or pole $P_q$ of the nonconstant rational function $q$ gives
\[
s=
\frac{\operatorname{ord}_{P_q}(S)}
     {\operatorname{ord}_{P_q}(q)}
\in\mathbb Q.
\]
Therefore
\[
a=\frac{s+d}{2},
\qquad
b=\frac{s-d}{2}
\]
are rational, which is alternative~(i).
\end{proof}

It remains to treat the constant-product and constant-ratio alternatives.

\begin{proposition}\label{prop:degenerate-finite}
Assume $(a,b)\notin\mathbb Q^2$ and fix a parametrization family.  The rational primes in this family for which the corresponding $(x,y)$-points lie on a level curve
\[
xy=q_0
\qquad\text{or}\qquad
x/y=t_0
\]
arising from the constant-product or constant-ratio alternatives of \cref{thm:curve-rigidity} form a finite set.
\end{proposition}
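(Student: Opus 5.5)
The plan is to avoid any geometry and work directly with the arithmetic points. Along each degenerate alternative of \cref{thm:curve-rigidity}, the trace equation \eqref{eq:parametrization-summary} collapses to a one-variable equation. Injectivity of the $p$-adic logarithm on principal units, together with the irrationality of the weights, then pins down a single arithmetic invariant. That invariant in turn pins down $\ell$. We treat the two alternatives separately. Since $(a,b)\notin\mathbb Q^2$, the relevant weight is nonzero in each case.

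\emph{Constant-product alternative.} Here $a=b$, and $a\notin\mathbb Q$, in particular $a\neq0$. For a prime $\ell$ of the fixed family with trace $r$, \eqref{eq:parametrization-summary} gives $(A+B)(xy)^a=r$ with $xy=\gamma\ell$. Hence $A+B\ne0$, and $(\gamma\ell)^a$ equals the fixed value $r/(A+B)$. Suppose $\ell_1\neq\ell_2$ are two such primes. Then $w=\ell_1/\ell_2=(x_1y_1)/(x_2y_2)$ lies in the principal-unit group $1+p^h\mathcal O_E$ and satisfies $w^a=\exp(a\log w)=1$. By the choice of depth $h$, both $\exp$ and $\log$ are injective there, so $a\log w=0$ and hence $\log w=0$. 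This gives $w=1$, a contradiction. So at most one prime of the family occurs in this case, whatever the level $q_0$ is.

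\emph{Constant-ratio alternative.} Here $b=-a$ with $a\notin\mathbb Q$, and the equation reads $AT+BT^{-1}=r$, where $T=t^a$ and $t=x/y$. Because $r\ne0$, $A$ and $B$ cannot both vanish. The equation $AT^2-rT+B=0$ is therefore a nonzero polynomial equation in $T$, so $T$ takes at most two values. As before, $t_1^a=t_2^a$ with $t_1/t_2\in1+p^h\mathcal O_E$ forces $t_1=t_2$. Hence $t=\tau(z/\bar z)$ takes at most two values, and so $\alpha/\bar\alpha=\tau^{-1}(t)\cdot\alpha_0/\bar\alpha_0$ takes at most two values in $K^\times$.

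If two admissible generators $\alpha_1,\alpha_2$ have the same value of $\alpha/\bar\alpha$, then $\alpha_1/\alpha_2$ is fixed by $c$. So $\alpha_1=\mu\alpha_2$ with $\mu\in\mathbb Q^\times$. Taking norms in $\mathfrak l_i\mathfrak a_j^{-1}=(\alpha_i)$ gives $\ell_1=\mu^2\ell_2$. Distinct primes cannot differ by a rational square, so $\ell_1=\ell_2$. Thus at most two primes occur in this case.

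The only delicate step is checking that all the ratios fed into the logarithm really lie in the deep principal-unit disc where $\log$ is injective. This follows from \cref{lem:full-bidisc}(i) and \eqref{eq:norm-relation}, since $x,y\in1+p^h\mathcal O_E$ and hence $xy$ and $x/y$ lie there too. Combining the two cases with the finitely many parametrization families of \cref{prop:split-parametrization} gives the claimed finiteness.
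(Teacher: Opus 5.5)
Your argument is correct, and it reaches the conclusion by a more elementary route than the paper.

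\textbf{The paper's route.} It first shows that only finitely many levels can occur, by applying Weierstrass preparation to the one-variable analytic functions $(A+B)Q^a-r$ and $AT^a+BT^{-a}-r$. It then bounds the primes on each level. For $xy=q_0$ it uses the norm relation. For $x/y=t_0$ it places the admissible generators on a rational line $\alpha=k\alpha_{t_0}$ and analyses the valuations in $A_0k^2=B_0\ell$.

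\textbf{Your route.} You replace the first step by the injectivity of $Q\mapsto Q^a=\exp(a\log Q)$ on the deep principal-unit disc when $a\neq0$. This turns the ratio equation into the quadratic $AT^2-rT+B=0$, giving at most one product level and at most two ratio levels. You replace the second step by the observation that two generators with the same value of $\alpha/\bar\alpha$ differ by some $\mu\in\mathbb Q^\times$. Then $\ell_1=\mu^2\ell_2$, which is impossible for distinct primes.

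\textbf{What this buys.} You get explicit bounds of one, respectively two, primes per family, with no rigid-analytic identity or preparation theorem. In effect you prove \cref{cor:diagonal-antidiagonal} with these sharp constants.

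\textbf{One point to make explicit.} You impose the trace equation at each arithmetic point, whereas the statement speaks of points lying on level curves along which $\Phi$ vanishes. This costs nothing. When $a=b$ (resp. $a=-b$), $\Phi$ depends only on $xy$ (resp. $x/y$). So if $\Phi$ vanishes on an open piece of such a level curve, it vanishes at every point of that curve inside the disc. Moreover, \cref{prop:nonrational} only ever feeds in trace-$r$ points.

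\textbf{A minor correction.} The polynomial $AT^2-rT+B$ is nonzero simply because its linear coefficient $-r$ is nonzero. Your remark that $A$ and $B$ cannot both vanish is not what is needed there.
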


\begin{proof}
If $a+b=0$, the possible ratio levels are zeros of the fixed
one-variable analytic function
\[
AT^a+BT^{-a}-r.
\]
If $a-b=0$, the possible product levels are zeros of
\[
(A+B)Q^a-r.
\]
Neither function vanishes identically, so Weierstrass preparation
gives only finitely many possible levels.

For fixed $q_0$, the relation $xy=q_0$ together with \eqref{eq:norm-relation} determines at most one rational prime $\ell$.  For fixed $t_0$, the condition $x/y=t_0$ is equivalent, for a global generator $\alpha\in K$, to
\[
\tau(\alpha)
=t_0\frac{\tau(\alpha_0)}{\tau^c(\alpha_0)}\tau^c(\alpha).
\]
Its solution set in $K$ is a $\mathbb Q$-subspace of dimension at most one: if two nonzero solutions $\alpha_1,\alpha_2$ were linearly independent over $\mathbb Q$, then
\[
\tau(\alpha_1/\alpha_2)=\tau^c(\alpha_1/\alpha_2),
\]
so $\alpha_1/\alpha_2$ would be fixed by the non-trivial automorphism of $K/\mathbb Q$ and hence belong to $\mathbb Q$.  Thus the admissible generators lie in the intersection of the fixed fractional ideal with a rational line.  If this intersection is nonzero, after choosing a generator and clearing a fixed rational denominator, every admissible generator has the form
\[
\alpha=k\alpha_{t_0},
\qquad k\in\mathbb Z,
\]
for a fixed nonzero $\alpha_{t_0}$.  The norm relation becomes
\[
A_0k^2=B_0\ell
\]
with fixed nonzero integers $A_0,B_0$.   If a prime $q\nmid A_0B_0$ divided $k$, then
\[
2v_q(k)=v_q(\ell)\in\{0,1\},
\]
which is impossible because the left side is a positive even integer.  Hence every prime divisor of $k$ belongs to the fixed finite set of primes dividing $A_0B_0$.  For such a fixed prime $q$, the equation gives
\[
v_q(A_0)+2v_q(k)
=
v_q(B_0)+\mathbf 1_{q=\ell}.
\]
The right-hand side has only two possible values as $\ell$ varies, so $v_q(k)$ has only finitely many possibilities.  Thus $k$ itself has only finitely many possible values (up to sign), and then $A_0k^2=B_0\ell$ determines $\ell$.  Hence only finitely many $k$ and $\ell$ occur.
\end{proof}

\begin{corollary}\label{cor:diagonal-antidiagonal}
Assume $r\ne0$.  If either
\[
a=b\notin\mathbb Q
\qquad\text{or}\qquad
a=-b\notin\mathbb Q,
\]
then
\[
\pi_{\rho,r}(X)=O_{\rho,r}(1).
\]
\end{corollary}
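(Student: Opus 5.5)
The idea is that in these two cases the trace equation already forces every contributing point onto finitely many level curves. So we reduce directly to the finiteness argument of \cref{prop:degenerate-finite}, and no Pila--Wilkie counting is needed. By \cref{prop:split-parametrization}, outside a fixed finite set every prime counted by $\pi_{\rho,r}(X)$ is split in $K$ and lies in one of finitely many parametrization families. It therefore suffices to fix one family and show that only finitely many primes $\ell$ in it satisfy $Ax^ay^b+Bx^by^a=r$, where $x=\tau(z)$ and $y=\tau^c(z)$ lie in the bidisc $\mathbb D_{x,y}$. Since $a\notin\mathbb Q$, we have $a\ne0$, so $(a,b)\ne(0,0)$ and $(a,b)\notin\mathbb Q^2$.

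\emph{Case $a=b$.} On $\mathbb D_{x,y}$ the identity $x^ay^a=(xy)^a$ holds, because both sides equal $\exp(a\log x+a\log y)$. Hence at a contributing point $(A+B)q^a=r$ with $q=xy=\gamma\ell$. Since $r\ne0$, we get $A+B\ne0$. The one-variable function $Q\mapsto(A+B)Q^a-r$ on the disc $1+p^h\mathcal O$ is therefore not identically zero: it is nonconstant since $a\ne0$. By Weierstrass preparation it has finitely many zeros $q_0$, so every contributing point lies on one of finitely many level curves $xy=q_0$. The norm relation \eqref{eq:norm-relation} then gives $\ell=q_0/\gamma$, a single prime per level.

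\emph{Case $a=-b$.} Here $x^ay^{-a}=t^a$ with $t=x/y$, and the equation becomes $At^a+Bt^{-a}=r$. This function of $T$ is not identically zero; otherwise $A T^{2a}-rT^a+B\equiv0$ would force $T^a$ to be constant, which is false since $a\ne0$. Weierstrass preparation again leaves finitely many ratio levels $t_0$. For each level, the second half of the proof of \cref{prop:degenerate-finite} shows that the admissible generators lie on a rational line $\alpha=k\alpha_{t_0}$, and the equation $A_0k^2=B_0\ell$ allows only finitely many $(k,\ell)$.

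Summing over the finitely many families and adding the fixed exceptional set gives $\pi_{\rho,r}(X)=O_{\rho,r}(1)$. The main point needing care is the justification that the analytic powers combine as claimed, namely $x^ay^a=(xy)^a$ and $x^ay^{-a}=(x/y)^a$, with $xy$ and $x/y$ remaining in a disc where the power $Q^a$ or $T^a$ is defined. This follows from the choice of depth $h$, which guarantees that all the logarithms and exponentials involved converge. One also needs nonvanishing of the level functions so that Weierstrass preparation applies; this is exactly the same argument as in \cref{prop:degenerate-finite}.
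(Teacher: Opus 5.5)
Your proposal is correct and follows essentially the same route as the paper's proof. In each parametrization family, the trace equation collapses to a non-vanishing one-variable analytic equation in $q=xy$ or $t=x/y$, so Weierstrass preparation leaves finitely many levels. Each level then yields finitely many primes, via $q_0=\gamma\ell$ or the fixed-ratio argument of \cref{prop:degenerate-finite}. The only cosmetic difference is that you deduce $A+B\neq0$ from the existence of a contributing point, while the paper notes that $A+B=0$ makes the equation the nonzero constant $-r$.
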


\begin{proof}
Fix a parametrization family.  If $a=b\notin\mathbb Q$, then the trace equation is
\[
(A+B)(xy)^a-r=0.
\]
This is a nonzero one-variable analytic function of $q=xy$: if $A+B=0$, it is the nonzero constant $-r$, and otherwise it is nonconstant.  It therefore has only finitely many zeros on the fixed closed principal-unit disc.  Each zero $q_0$ determines at most one rational prime through $q_0=\gamma\ell$.

If $a=-b\notin\mathbb Q$, then the trace equation is the nonzero one-variable analytic equation
\[
A(x/y)^a+B(x/y)^{-a}-r=0.
\]
It has only finitely many zeros $t_0$ on the fixed ratio disc.  For each such $t_0$, the fixed-ratio argument in the proof of \cref{prop:degenerate-finite} shows that only finitely many rational primes can occur.  Summing over the finitely many parametrization families proves the result.
\end{proof}

\section{Non-rational weights}\label{sec:nonrational}

For point counting we use the rigid-analytic Pila-Wilkie theorem of Binyamini-Kato \cite{BinyaminiKato2025}, in the framework of \cite{PilaWilkie2006}.  For the earlier non-Archimedean subanalytic Pila-Wilkie theory, see \cite{CluckersComteLoeser2015}.

The trace equation takes values in $E$, while the point-counting theorem below is applied to $\mathbb Q_p$-rational points.  We therefore use the following elementary coefficient decomposition.  Let $d_E=[E:\mathbb Q_p]$ and choose a $\mathbb Q_p$-basis $e_1,\ldots,e_{d_E}$ of $E$.  If
\[
f=\sum_{\nu}c_\nu T^\nu\in E\langle T_1,\ldots,T_n\rangle
\]
and $c_\nu=\sum_i e_i c_{\nu,i}$ with $c_{\nu,i}\in\mathbb Q_p$, then the equivalence of norms on the finite-dimensional $\mathbb Q_p$-vector space $E$ implies
\[
c_{\nu,i}\longrightarrow0
\qquad\text{for every }i.
\]
Hence
\[
f=e_1f_1+\cdots+e_{d_E}f_{d_E},
\qquad
f_i:=\sum_\nu c_{\nu,i}T^\nu\in\mathbb Q_p\langle T_1,\ldots,T_n\rangle.
\]
For every $x\in\mathbb Q_p^n$ in the closed unit polydisc,
\[
f(x)=0\quad\Longleftrightarrow\quad f_1(x)=\cdots=f_{d_E}(x)=0.
\]
Thus the $\mathbb Q_p$-rational zero set of $f$ is the rational point set of the fixed affinoid cut out by the coordinate functions $f_i$.  For a rigid subspace $Y$ of a closed polydisc over $\mathbb Q_p$, we write
\[
Y(\mathbb Q):=Y(\mathbb Q_p)\cap\mathbb Q^n,
\]
where the intersection is taken in the ambient coordinates; heights below are measured in these coordinates, as in \cite[Section~1.2]{BinyaminiKato2025}.

We use the following form of the rigid-analytic Pila-Wilkie theorem.

\begin{theorem}[Binyamini-Kato]\label{thm:BK}
Let $\mathcal B$ be a $\mathbb Q_p$-affinoid algebra presented in a closed unit polydisc and put $Y=\operatorname{Sp}\mathcal B$.  Following \cite[Sections~2.11-2.13]{BinyaminiKato2025}, call an irreducible closed subset of the ambient polydisc \emph{algebraic} if it is an irreducible component of a closed set defined by an ideal obtained by extension from a polynomial ideal in the ambient affine space.  Let $Y^{\mathrm{alg}}$ be the union of all positive-dimensional irreducible closed algebraic subsets contained in $Y$, and put
\[
Y^{\mathrm{tran}}=Y\setminus Y^{\mathrm{alg}}.
\]
For every $\delta>0$ there is a constant $C(\mathcal B,\delta)$ such that
\[
\#\{P\in Y^{\mathrm{tran}}(\mathbb Q):H(P)\le H\}
\le C(\mathcal B,\delta)H^\delta,
\]
where $H(P)$ is the maximum of the usual multiplicative heights of the rational coordinates of $P$; see \cite[Theorem~1.2.1]{BinyaminiKato2025}.
\end{theorem}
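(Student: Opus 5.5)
The plan is not to reprove the counting theorem but to reduce the statement to \cite[Theorem~1.2.1]{BinyaminiKato2025}. The work is to check that our objects and conventions match theirs, so that their constant can be taken as $C(\mathcal B,\delta)$. I will proceed in three steps: match the ambient setup, compare the algebraic parts, and compare the heights.

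First, the ambient setup. Their theorem is formulated for a closed rigid subspace of a closed unit polydisc over a complete non-archimedean field. We specialize to $\mathbb Q_p$, so our $Y=\operatorname{Sp}\mathcal B$ is such a subspace once a presentation $\mathcal B=\mathbb Q_p\langle T_1,\dots,T_n\rangle/I$ is fixed. Their rational points are, as in \cite[Section~1.2]{BinyaminiKato2025}, points of $Y(\mathbb Q_p)$ whose ambient coordinates lie in $\mathbb Q$. This agrees with our definition of $Y(\mathbb Q)$. Every coordinate of such a point lies in $\mathbb Q\cap\mathbb Z_p$, so no rational point is lost by working in the unit polydisc.

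Second, the algebraic part, which I expect to be the main obstacle. Binyamini--Kato define the algebraic part in \cite[Sections~2.11--2.13]{BinyaminiKato2025}. That definition is phrased through irreducible components of analytic sets cut out by extensions of polynomial ideals, possibly with a more restrictive notion, such as connected or irreducible analytic pieces of algebraic sets. I will argue directly that every positive-dimensional set counted in their algebraic part is contained in our $Y^{\mathrm{alg}}$. This inclusion gives $Y^{\mathrm{tran}}\subseteq Y\setminus Y^{\mathrm{alg}}_{\mathrm{BK}}$. Since their bound concerns the latter set, it a fortiori bounds our count. Only this one inclusion is needed; equality of the two algebraic parts is not required. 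I would first check whether their definition is literally the one we restated. If it is, the inclusion is tautological.

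Third, the heights and constants. Our $H(P)$ is the maximum of the multiplicative heights of the coordinates. Their height is either the same or differs by a fixed power, for instance the height of the projective point $(1:x_1:\dots:x_n)$. Such a height is bounded by $H(P)^n$, and replacing $\delta$ by $\delta/n$ absorbs this change. Finally, their constant depends only on the presentation of $Y$ and on $\delta$, and we record it as $C(\mathcal B,\delta)$ with the presentation regarded as part of $\mathcal B$.
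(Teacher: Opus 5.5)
Your proposal is correct and takes the same approach as the paper. The paper gives no argument beyond quoting \cite[Theorem~1.2.1]{BinyaminiKato2025}, and it defines the algebraic part verbatim following \cite[Sections~2.11--2.13]{BinyaminiKato2025}, so your inclusion of their algebraic part into $Y^{\mathrm{alg}}$ is tautological. Your extra bookkeeping is also sound: rational points of $Y$ lie in the unit polydisc automatically, and a projective height bounded by $H(P)^n$ is absorbed by replacing $\delta$ with $\delta/n$.
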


We now relate the algebraic part in the Binyamini-Kato theorem to the trace locus.

\begin{lemma}\label{lem:BK-compatibility}
Assume $r\ne0$ and $(a,b)\ne(0,0)$.  Fix a parametrization family.  Write
\[
f=e_1f_1+\cdots+e_{d_E}f_{d_E},
\qquad f_i\in\mathbb Q_p\langle u,v\rangle,
\]
relative to the fixed $\mathbb Q_p$-basis of $E$, and set
\[
Y=\operatorname{Sp}\bigl(\mathbb Q_p\langle u,v\rangle/(f_1,\ldots,f_{d_E})\bigr).
\]
Recall the affine-linear map $\varphi(u,v)=(x(u,v),y(u,v))$ introduced in Section~\ref{sec:rigidity}.  By \eqref{eq:xy-affine-bidisc}, it is an algebraic automorphism, and its restriction induces a rigid-analytic isomorphism
\[
\mathbb B_E^2\xrightarrow{\sim}\varphi(\mathbb B_E^2).
\]

Then:
\begin{enumerate}[(a)]
\item Every Frobenius prime in the fixed family gives a rational point of $Y(\mathbb Q)$.
\item Every positive-dimensional irreducible algebraic subset $C\subset Y$ in the sense of \cref{thm:BK} has dimension one.
\item Let $C$ be such a subset and let $D$ be any irreducible component of the analytic base change $C_E$.  Then there is an irreducible algebraic curve $Z\subset\mathbb A_E^2$ and a nonempty admissible open
\[
\Omega\subset Z^{\mathrm{an}}\cap\mathbb D_{x,y}
\]
such that
\[
\varphi(D)\subset Z^{\mathrm{an}},
\qquad
\Phi|_\Omega=0.
\]
Thus $Z$ and $\Omega$ satisfy the hypotheses of \cref{thm:curve-rigidity}.
\end{enumerate}
\end{lemma}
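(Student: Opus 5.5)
Part (a) is a direct translation of \cref{lem:full-bidisc}(iv). A Frobenius prime in the fixed family with trace $r$ yields integers $(u,v)\in\mathbb Z^2\subset\mathbb Z_p^2$ with $f(u,v)=0$. By the coefficient decomposition, this is equivalent to $f_1(u,v)=\cdots=f_{d_E}(u,v)=0$, since $(u,v)$ is $\mathbb Q_p$-rational. Hence $(u,v)\in Y(\mathbb Q_p)\cap\mathbb Q^2=Y(\mathbb Q)$.

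For (b), I would show $\dim Y\le1$. Then every irreducible closed subset of $Y$ has dimension at most one, and positive-dimensional ones have dimension exactly one. It suffices to see that $Y$ is not all of $\mathbb B^2_{\mathbb Q_p}$, and that no two-dimensional component occurs. Since $\mathbb Q_p\langle u,v\rangle$ is an integral domain of dimension $2$, this holds as soon as some $f_i$ is nonzero. Now $f\not\equiv0$: by the remark preceding \cref{thm:curve-rigidity}, $\Phi\not\equiv0$ on $\mathbb D_{x,y}$ when $(a,b)\ne(0,0)$. Since $\varphi$ maps $\mathbb B^2_E$ isomorphically onto the subdisc $\varphi(\mathbb B_E^2)\supset(1+p^{h+c_0}\mathcal O_E)^2$, the identity theorem on the connected bidisc shows that $\Phi$ does not vanish on this subdisc either. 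Hence $f=\Phi\circ\varphi\ne0$, and consequently some $f_i\ne0$.

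For (c), the plan is as follows. By definition, $C$ is an irreducible component of the analytic locus $V(I)$ in the $\mathbb Q_p$-polydisc, where $I$ comes from a polynomial ideal $I_0\subset\mathbb Q_p[u,v]$. Since $C$ has dimension one, I would take $W\subset\mathbb A^2_{\mathbb Q_p}$ to be an irreducible one-dimensional component of $V(I_0)$ whose analytification contains $C$. This uses Conrad's compatibility of irreducible components with analytification \cite[Theorem~2.3.1]{Conrad1999}: $C$ is a union of irreducible components of $W^{\mathrm{an}}\cap\mathbb B^2$.

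After base change to $E$, the component $D$ of $C_E$ lies in $W_E^{\mathrm{an}}$. I would then choose an irreducible component $W'$ of $W_E$ containing $D$, again via Conrad's theorem over $E$, and set $Z=\varphi(W')$. This is an irreducible algebraic curve in $\mathbb A^2_E$, because $\varphi$ is an affine-linear algebraic automorphism. Then $\varphi(D)\subset Z^{\mathrm{an}}$, and $\varphi(D)\subset\mathbb D_{x,y}$.

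Since $C\subset Y$, all $f_i$ vanish on $C$, so $f$ vanishes on $D$, and hence $\Phi$ vanishes on $\varphi(D)$. Finally, $\varphi(D)$ is a one-dimensional irreducible closed analytic subset of the one-dimensional space $Z^{\mathrm{an}}\cap\mathbb D_{x,y}$. It should therefore contain a nonempty admissible open $\Omega$, for instance a small affinoid disc around a smooth point of $Z$ lying in $\varphi(D)$ and away from the other components.

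The main obstacle is this last step: verifying that a one-dimensional irreducible closed analytic subset of the curve $Z^{\mathrm{an}}$ contains a nonempty admissible open of $Z^{\mathrm{an}}$. I would handle it by working locally at a smooth point of $Z$, where $Z^{\mathrm{an}}$ is a disc. There a closed analytic subset is either finite or the whole disc, by Weierstrass preparation in one variable. A second delicate point is compatibility of components under base change from $\mathbb Q_p$ to $E$, which I would again reduce to Conrad's theorem.
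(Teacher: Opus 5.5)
Your proposal is correct and follows essentially the same route as the paper. Parts (a) and (b) match the paper's argument. In (c) the paper likewise uses Conrad's compatibility of irreducible components with analytification to put $D$ inside $Z_0^{\mathrm{an}}$ for an algebraic curve $Z_0$, and then transports everything by $\varphi$.

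The differences are minor. You pick the algebraic component over $\mathbb Q_p$ before base change, whereas the paper base-changes first via \cite[Theorem~3.4.2]{Conrad1999}. You also prove by hand, using a smooth point, a local disc and one-variable Weierstrass preparation, that $\varphi(D)$ contains a nonempty admissible open of $Z^{\mathrm{an}}$; the paper reads this off directly from \cite[Corollary~2.2.9 and Theorem~2.3.1]{Conrad1999}.
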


\begin{proof}
Part (a) follows from \cref{lem:full-bidisc}(iv) and the coefficient decomposition above.

For (b), the non-vanishing of $\Phi$ and
\[
(1+p^{h+c_0}\mathcal O_E)^2
\subset \varphi(\mathbb B_E^2)
\]
show that $f\ne0$. Hence some $f_i$ is nonzero, and therefore
$\dim Y\le1$.

For (c), by the definition of the Binyamini-Kato algebraic part there is a polynomial ideal $J\subset\mathbb Q_p[u,v]$ such that $C$ is an irreducible component of the rigid zero locus
\[
W=V(J)^{\mathrm{an}}\cap\mathbb B_{\mathbb Q_p}^2
\]
inside the closed unit bidisc.  Since $\dim C=1$, the ideal $J$ is nonzero.  After scalar extension to $E$, every irreducible component $D$ of $C_E$ is an irreducible component of $W_E$ by \cite[Theorem~3.4.2]{Conrad1999}.

Put $X=V(J)_E$.  Then
\[
W_E=X^{\mathrm{an}}\cap\mathbb B_E^2
\]
is an admissible open subspace of $X^{\mathrm{an}}$.  By the compatibility of algebraic irreducible components with analytification and admissible opens \cite[Corollary~2.2.9 and Theorem~2.3.1]{Conrad1999}, there is a unique irreducible algebraic component $Z_0$ of $X$ such that
\[
D\subset Z_0^{\mathrm{an}}.
\]
Moreover, $D$ contains a nonempty admissible open $\Omega_0$ of $Z_0^{\mathrm{an}}$.  Equivalently, after shrinking inside $D$ if necessary, one may arrange
\[
\varnothing\ne\Omega_0\subset D\subset W_E,
\qquad
\Omega_0\subset Z_0^{\mathrm{an}}\cap\mathbb B_E^2,
\]
and $\Omega_0$ is admissible open in $Z_0^{\mathrm{an}}$.  Since $D$ has dimension one, the same is true of $Z_0$; in particular, $Z_0$ is an algebraic curve. 

Set
\[
Z=\varphi(Z_0),
\qquad
\Omega=\varphi(\Omega_0).
\]
Since $\varphi$ is an algebraic automorphism, $Z$ is an irreducible algebraic curve, $\varphi(D)\subset Z^{\mathrm{an}}$, and $\Omega$ is a nonempty admissible open contained in $Z^{\mathrm{an}}\cap\varphi(\mathbb B_E^2)\subset Z^{\mathrm{an}}\cap\mathbb D_{x,y}$.  Finally, every $f_i$ vanishes on $D$, so $f=e_1f_1+\cdots+e_{d_E}f_{d_E}$ vanishes on $\Omega_0$.  Therefore $\Phi|_\Omega=0$. 
\end{proof}

\begin{proposition}\label{prop:nonrational}
Under the hypotheses of \cref{thm:main}, if $(a,b)\notin\mathbb Q^2$, then, for every $\varepsilon>0$,
\[
\pi_{\rho,r}(X)\ll_{\rho,r,\varepsilon}X^\varepsilon.
\]
\end{proposition}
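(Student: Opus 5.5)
We prove the bound family by family; there are only finitely many families. By \cref{prop:split-parametrization}, outside a fixed finite set every prime $\ell\le X$ with $\operatorname{tr}\rho(\operatorname{Frob}_\ell)=r$ lies in one of the finitely many parametrization families. Fix one family, with its affinoid $Y=\operatorname{Sp}(\mathbb Q_p\langle u,v\rangle/(f_1,\dots,f_{d_E}))$ from \cref{lem:BK-compatibility}. Since $(a,b)\notin\mathbb Q^2$, in particular $(a,b)\ne(0,0)$, so the lemma applies.

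By \cref{lem:BK-compatibility}(a), each such prime $\ell$ gives an integral point $(u,v)\in Y(\mathbb Q)\cap\mathbb Z^2$. Here $m=m_0+p^hu$ and $n=n_0+p^hv$, so \eqref{eq:height-bound} gives $H(u,v)\ll\sqrt\ell\ll\sqrt X$. Distinct primes give distinct points: the point determines $\alpha$, hence $\mathfrak l=(\alpha)\mathfrak a_j$, hence $\ell$. So it suffices to count these points, and we split them according to whether they lie in $Y^{\mathrm{tran}}$ or in $Y^{\mathrm{alg}}$.

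\emph{Transcendental part.} Apply \cref{thm:BK} with $\delta=2\varepsilon$. The number of points of $Y^{\mathrm{tran}}(\mathbb Q)$ of height at most $C\sqrt X$ is $\ll_\varepsilon X^{\varepsilon}$.

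\emph{Algebraic part.} Let $(u,v)\in Y^{\mathrm{alg}}$. Then it lies on some positive-dimensional irreducible algebraic subset $C\subset Y$, which is a curve by \cref{lem:BK-compatibility}(b). The point $(u,v)$ is $\mathbb Q_p$-rational, so it lies on some irreducible component $D$ of $C_E$.
\begin{enumerate}[(1)]
\item \cref{lem:BK-compatibility}(c) produces an irreducible algebraic curve $Z$ with $\varphi(D)\subset Z^{\mathrm{an}}$, on which $\Phi$ vanishes on a nonempty admissible open.
\item Apply \cref{thm:curve-rigidity} with $L=E$. Alternative~(i) is excluded by hypothesis. So either $a+b=0$ and $x/y$ is constant on $Z$, or $a-b=0$ and $xy$ is constant on $Z$.
\item Hence $\varphi(u,v)$ lies on a level curve $x/y=t_0$ or $xy=q_0$ of the kind covered by \cref{prop:degenerate-finite}.
\end{enumerate}
That proposition bounds the number of primes with such points by a constant depending only on the family, so the algebraic part contributes $O(1)$. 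Its proof only uses that the levels are zeros of the fixed one-variable functions; the level of a curve $Z$ in the trace locus is such a zero, because $\Phi=0$ on $\Omega$ forces, e.g., $A t_0^a+Bt_0^{-a}=r$.

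Summing over families gives $\pi_{\rho,r}(X)\ll X^\varepsilon+O(1)$.

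The main obstacle is the algebraic part. $Y^{\mathrm{alg}}$ may be a union of infinitely many algebraic curves, so we cannot count curve by curve. Instead, every such curve must be shown to land on the finitely many admissible levels. The point needing care is that a constant rational function on $Z$ means algebraic over $E$, hence an actual constant value $t_0$ or $q_0$ in the fixed disc. One must also check that this value is a zero of the one-variable function, which follows by evaluating on $\Omega$.
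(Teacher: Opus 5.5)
Your proposal is correct and follows the paper's proof essentially step for step. You fix a family, use \cref{lem:BK-compatibility}, \cref{thm:curve-rigidity} and \cref{prop:degenerate-finite} to confine the Frobenius points on algebraic curves of $Y$ to finitely many primes, and count the rest with \cref{thm:BK} at $\delta=2\varepsilon$ via $H(u,v)\ll\sqrt X$. The paper settles the point you flag as you suggest, by evaluating $xy$ or $x/y$ at the $E$-rational point $\varphi(P_E)$ to pin the level in $E^\times$. One side remark: since $\dim Y\le1$, every positive-dimensional irreducible closed subset of $Y$ is one of its finitely many irreducible components, so $Y^{\mathrm{alg}}$ cannot be an infinite union of curves. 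The rigidity theorem is needed because even a single algebraic curve could carry many rational points, not because there might be infinitely many curves.
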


\begin{proof}
Fix a parametrization family.  By \cref{lem:full-bidisc},
\[
m=m_0+p^hu,
\qquad
n=n_0+p^hv,
\qquad (u,v)\in\mathbb Z_p^2,
\]
and the trace equation is $f(u,v)=0$.  Writing $f=e_1f_1+\cdots+e_{d_E}f_{d_E}$, the coordinate equations $f_i=0$ define the affinoid $Y$ of \cref{lem:BK-compatibility}.  Congruence conditions coming from the ray modulus away from $p$ are discarded at this stage; this can only enlarge the set of rational points to be counted.

Let $C$ be a positive-dimensional irreducible algebraic subset of $Y$, and let $P=(u,v)\in C(\mathbb Q)$ arise from a Frobenius prime in the fixed family.  After base change to $E$, the point $P_E$ lies on some irreducible component $D$ of $C_E$.  By \cref{lem:BK-compatibility}, $\varphi(D)$ is contained in an algebraic curve $Z$ with a nonempty admissible open in the trace locus.  Since $(a,b)\notin\mathbb Q^2$, \cref{thm:curve-rigidity} forces either $xy$ or $x/y$ to be constant on $Z$.  Because $\varphi(P_E)\in\varphi(D)\subset Z^{\mathrm{an}}$ is an $E$-rational point and both coordinate functions are regular and nonzero on the bidisc, the corresponding constant value is
\[
q_0=xy(\varphi(P_E))\in E^\times
\qquad\text{or}\qquad
 t_0=(x/y)(\varphi(P_E))\in E^\times.
\]
Thus the original Frobenius point lies on the associated level curve, and \cref{prop:degenerate-finite} shows that such points are supported on finitely many rational primes.  Hence every remaining rational point arising from a Frobenius prime lies in $Y^{\mathrm{tran}}(\mathbb Q)$.

For a remaining prime $\ell\le X$, the parametrization gives a point
$(u,v)\in Y^{\mathrm{tran}}(\mathbb Q)$.

Since\[
u=\frac{m-m_0}{p^h},
\qquad
v=\frac{n-n_0}{p^h},
\]
all family data are fixed and \eqref{eq:height-bound} gives
\[
H(u,v)\ll1+|m|+|n|\ll\sqrt\ell\ll\sqrt X,
\]
where $H$ is the maximum of the usual multiplicative heights of the two rational coordinates.  Apply \cref{thm:BK} with $\delta=2\varepsilon$ to obtain
\[
\#\{P\in Y^{\mathrm{tran}}(\mathbb Q):H(P)\ll\sqrt X\}
\ll X^\varepsilon.
\]
A fixed lattice point determines at most one rational prime through \eqref{eq:norm-relation}; conversely every prime to be counted yields at least one point in one of the finitely many parametrization families.  Summing over those families absorbs the corresponding constants and proves the proposition.
\end{proof}

\section{Rational non-integral weights}\label{sec:rational}

Assume throughout this section that
\[
(a,b)\in\mathbb Q^2\setminus\mathbb Z^2.
\]
Write
\begin{equation}\label{eq:rational-weights}
a=\frac{M_1}{N},
\qquad
b=\frac{M_2}{N},
\end{equation}
where
\[
M_1,M_2,N\in\mathbb Z,
\qquad
N>1,
\qquad
\gcd(M_1,M_2,N)=1.
\]
The integers $M_1,M_2$ may be negative.  The denominator $N$ may be divisible by $p$; the analytic powers are defined on a sufficiently deep principal-unit disc, and the finitely many primes $\ell$ dividing $N$ are included in the exceptional set.

Fix a parametrization family.  To separate the two summands in the trace formula, for a Frobenius prime in this family set
\begin{equation}\label{eq:UV}
U=Ax^{M_1/N}y^{M_2/N},
\qquad
V=Bx^{M_2/N}y^{M_1/N}.
\end{equation}
Then, in $\overline{\mathbb Q}_p$,
\begin{equation}\label{eq:UV-sum}
U+V=r.
\end{equation}

We first show that these quantities are algebraic.

\begin{lemma}\label{lem:algebraicity}
Under the fixed embedding $\iota_p:\overline{\mathbb Q}\hookrightarrow\overline{\mathbb Q}_p$, the constants $A,B$ and, for every Frobenius prime in the fixed family, the quantities $U,V$ are images of algebraic numbers.  After replacing them by their unique algebraic representatives, and writing $x=z=\alpha/\alpha_0$ and $y=c(z)$ for the corresponding algebraic representatives of the coordinates, one has in $\overline{\mathbb Q}$
\begin{equation}\label{eq:UV-algebraic-sum}
U+V=r,
\end{equation}
and
\begin{equation}\label{eq:UV-algebraic-powers}
U^N=A^Nx^{M_1}y^{M_2},
\qquad
V^N=B^Nx^{M_2}y^{M_1}.
\end{equation}
\end{lemma}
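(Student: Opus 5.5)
The plan is to identify $U$ and $V$ with Hecke-character values, $U=\Psi(\mathfrak l)$ and $V=\Psi(\overline{\mathfrak l})$, and to show that $\Psi$ takes algebraic values on all ideals prime to $p\mathfrak m$ once the weights are rational. The formulas preceding \eqref{eq:AB-param-explicit} give $\Psi(\mathfrak l)=Ax^ay^b$ and $\Psi(\overline{\mathfrak l})=Bx^by^a$ with $x=\tau(z)$ and $y=\tau^c(z)$. In the notation \eqref{eq:UV} these are exactly $U$ and $V$. So it suffices to prove the following claim: for every fractional ideal $\mathfrak b$ prime to $p\mathfrak m$, the value $\Psi(\mathfrak b)\in\mathcal O_E^\times\subset\overline{\mathbb Q}_p$ lies in $\iota_p(\overline{\mathbb Q})$.

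To prove the claim, I first treat deep principal ideals. Let $\beta\in K^\times$ satisfy $\beta\equiv1\pmod{\mathfrak m}$ and $\beta\in1+p^h(\mathcal O_K\otimes\mathbb Z_p)$. By \eqref{eq:principal-reciprocity} and \eqref{eq:weight-decomposition},
\[
\Psi((\beta))=\lambda_p(\beta)^{-1}=\exp\bigl(a\log\tau(\beta)+b\log\tau^c(\beta)\bigr).
\]
Raising to the $N$-th power and using $Na=M_1$, $Nb=M_2$ gives $\Psi((\beta))^N=\tau(\beta)^{M_1}\tau^c(\beta)^{M_2}$. This step uses $\exp(M\log w)=w^M$ for $w$ in the deep disc and $M\in\mathbb Z$, which holds by the choice of $h$. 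The right-hand side is $\iota_p$ of an element of $K$, so $\Psi((\beta))$ is a root of a polynomial with algebraic coefficients and is therefore algebraic.

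For a general $\mathfrak b$, I use that the group of ideals prime to $p\mathfrak m$, modulo the principal ideals $(\beta)$ with $\beta$ as above, is finite. It is a quotient of the ray class group of modulus $\mathfrak m p^h$, up to the finite unit discrepancy. If $e$ is its exponent, then $\mathfrak b^e=(\beta)$ for such a $\beta$, so $\Psi(\mathfrak b)^{eN}$ is algebraic and hence $\Psi(\mathfrak b)$ is algebraic. This proves the claim, and in particular $U$ and $V$ are algebraic.

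Next come the constants and the relations. Since $x=\tau(z)$ with $z\in K$ deep at $p$, the same exponent identity gives $(x^ay^b)^N=x^{M_1}y^{M_2}$, so $x^ay^b$ is algebraic, and similarly $x^by^a$ is algebraic. Hence $A=U(x^ay^b)^{-1}$ and $B=V(x^by^a)^{-1}$ are algebraic. This can be checked using any single Frobenius prime in the family; alternatively, $A=\Psi(\mathfrak a_j)\lambda_p(\alpha_0)^{-1}$ can be handled directly in the same way.

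The identities $U+V=r$ and $U^N=A^N x^{M_1}y^{M_2}$, $V^N=B^Nx^{M_2}y^{M_1}$ hold in $\overline{\mathbb Q}_p$ by \eqref{eq:UV-sum} and the exponent identity. All quantities in them lie in the image of the injective field map $\iota_p$, so pulling back gives the same identities in $\overline{\mathbb Q}$. Under this pullback $x$ corresponds to $z$, and $y=\tau^c(z)=\tau(c(z))$ corresponds to $c(z)$. Unique representatives exist because $\iota_p$ is injective.

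The main obstacle I expect is the careful justification of $\exp(M\log w)=w^M$ together with the bookkeeping of the deep principal subgroup. The finite-index statement must account for both the congruence modulo $\mathfrak m$ and the depth $p^h$ at $p$, with global units contributing only finitely. I would take the exponent of the ray class group modulo $\mathfrak m p^h$ times the order of $(\mathcal O_K\otimes\mathbb Z_p)^\times/(1+p^h)$ to be safe.
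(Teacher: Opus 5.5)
Your proposal is correct and is essentially the paper's argument. Both proofs rest on the identity $\lambda_p(\beta)^{-N}=\tau(\beta)^{M_1}\tau^c(\beta)^{M_2}$ for deep $p$-units $\beta\in K^\times$, combined with finiteness of a ray-class-type quotient. The paper proves algebraicity of $\lambda_p(u)$ for all $p$-units $u$ and of $\Psi(\mathfrak a_j)$ (via finite order in $I_{\mathfrak m}^{(p)}/P_{\mathfrak m}^{(p)}$) separately and then reads off $U=Ax^ay^b$, $V=Bx^by^a$; you instead bundle both finiteness inputs into the ray class group modulo $\mathfrak m p^h$ and deduce $A,B$ afterwards.

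Two small points. First, your quotient is by definition the ray class group modulo $\mathfrak m p^h$, since $K$ has no real places, so no unit correction is needed. Second, use the direct route $A=\Psi(\mathfrak a_j(\alpha_0))$, $B=\Psi(\overline{\mathfrak a_j}(c(\alpha_0)))$ to get $A$ and $B$, because a family need not contain a Frobenius prime.
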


\begin{proof}
If $u\in K^\times$ is a $p$-unit, then $\lambda_p(u)$ is algebraic when the two weights are rational.  Indeed, for some $k>0$ the element $u^k$ lies in the deep principal-unit subgroup on which the analytic formula is valid, and then
\[
\lambda_p(u)^k
=\lambda_p(u^k)
=\tau(u^k)^{-a}(\tau^c(u^k))^{-b}.
\]
Using \eqref{eq:rational-weights}, the $N$-th power of the right-hand side is
\[
\tau(u^k)^{-M_1}(\tau^c(u^k))^{-M_2}\in\overline{\mathbb Q}.
\]
Thus $\lambda_p(u)^k$ is algebraic, and then $\lambda_p(u)$ is algebraic as a root of a polynomial $T^k-\lambda_p(u)^k$ with algebraic coefficients.

The coordinates $x,y$ are the $p$-adic images of
\[
z=\alpha/\alpha_0,
\qquad
c(z)\in K^\times.
\]
Since
\[
\bigl(x^{M_1/N}y^{M_2/N}\bigr)^N=x^{M_1}y^{M_2},
\]
the chosen analytic root is algebraic over $\iota_p(\overline{\mathbb Q})$ and therefore belongs to this algebraically closed subfield of $\overline{\mathbb Q}_p$.  The same holds with $M_1,M_2$ interchanged.

We next treat the constants.  The class of $\mathfrak a_j$ in $I_{\mathfrak m}^{(p)}/P_{\mathfrak m}^{(p)}$ has finite order, so there exist $k_j>0$ and $\beta\in K^\times$ with
\[
\mathfrak a_j^{k_j}=(\beta),
\qquad
\beta\equiv1\pmod{\mathfrak m},
\qquad
v_{\mathfrak p}(\beta)=0\quad(\mathfrak p\mid p).
\]
Then
\[
\Psi(\mathfrak a_j)^{k_j}=\lambda_p(\beta)^{-1}
\]
by \eqref{eq:principal-reciprocity}, so $\Psi(\mathfrak a_j)$ is algebraic; the same argument applies to $\Psi(\overline{\mathfrak a_j})$.

By construction $\alpha_0$ and $c(\alpha_0)$ are $p$-units, so $\lambda_p(\alpha_0)$ and $\lambda_p(c(\alpha_0))$ are algebraic.  The exact formulas \eqref{eq:AB-param-explicit} therefore give
\[
A,B\in\iota_p(\overline{\mathbb Q})^\times.
\]
The same is then true of $U$ and $V$.  Replacing $A,B,U,V$ by their unique preimages under $\iota_p$, and writing $x=z$ and $y=c(z)$, injectivity of $\iota_p$ turns \eqref{eq:UV-sum} and the defining $N$-th power identities into \eqref{eq:UV-algebraic-sum} and \eqref{eq:UV-algebraic-powers}.
\end{proof}

From now on we use the same symbols $A,B,x,y,U,V$ for these algebraic quantities and for their images under any chosen local embedding.

\begin{lemma}\label{lem:unit-root-unramified}
Let $F$ be a non-Archimedean local field of residue characteristic $\ell$, let $N\ge1$ satisfy $\ell\nmid N$, and let $c\in\mathcal O_F^\times$.  If $w\in\overline F$ satisfies
\[
w^N=c,
\]
then $F(w)/F$ is unramified.
\end{lemma}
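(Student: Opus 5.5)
The plan is to reduce to the polynomial $g(T)=T^N-c\in\mathcal O_F[T]$ and use the standard criterion that adjoining a root of a monic integral polynomial whose reduction is separable gives an unramified extension. The key steps, in order, are as follows.

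First, compute the derivative $g'(T)=NT^{N-1}$. Since $\ell\nmid N$, the integer $N$ is a unit in $\mathcal O_F$. Since $c$ is a unit, any root $w$ of $g$ in $\overline F$ is a unit in the integral closure, so $g'(w)=Nw^{N-1}$ is a unit there.

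Second, pass to residue fields. Let $\bar g=T^N-\bar c\in k_F[T]$, where $\bar c\ne0$. Then $\gcd(\bar g,\bar g')=1$, because $\bar g'=\bar N T^{N-1}$ with $\bar N\ne0$, and $0$ is not a root of $\bar g$. So $\bar g$ is separable over $k_F$.

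Third, let $h$ be the minimal polynomial of $w$ over $F$. It divides $g$ in $F[T]$ and is monic. By Gauss's lemma (or because its roots are integral), $h\in\mathcal O_F[T]$, and $\bar h$ divides $\bar g$, so $\bar h$ is separable. Now apply Hensel's lemma. Let $\bar h_1$ be an irreducible factor of $\bar h$, and let $F'/F$ be the unramified extension with residue field $k_F[T]/(\bar h_1)$. Then $h$ has a root in $F'$ lifting a simple root of $\bar h_1$. Each root of $h$ is conjugate to $w$ over $F$, so $F(w)$ is $F$-isomorphic to a subfield of $F'$. Since subextensions of unramified extensions are unramified, $F(w)/F$ is unramified.

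Equivalently, one can argue via the discriminant. The discriminant of $\mathcal O_F[w]$ divides $\operatorname{disc}(g)=\pm N^N c^{N-1}$, which is a unit. Hence $\mathcal O_F[w]$ is étale over $\mathcal O_F$, and no prime ramifies. I would present whichever version is cleaner; the discriminant version is a one-line citation (e.g.\ \cite[Chapter~III]{Neukirch1999}).

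There is no real obstacle here. The only point needing care is that a factor $h$ of $g$ need not be irreducible modulo the maximal ideal, so the argument must either pass to an irreducible factor of $\bar h$ as above or use the discriminant criterion directly. I would choose the discriminant route to avoid that bookkeeping.
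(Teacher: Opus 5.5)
Your proof is correct and is essentially the paper's argument. The paper notes that $NT^{N-1}$ is a unit in $\mathcal O_F[T]/(T^N-c)$, so this algebra is finite \'etale over the henselian ring $\mathcal O_F$, and $F(w)$ is one of the (unramified) field factors of its generic fiber; your discriminant version is the same \'etaleness criterion, since $\pm N^N c^{N-1}$ is up to sign the norm of $g'(T)$, and the only difference is that by working with the full algebra rather than with $\mathcal O_F[w]\cong\mathcal O_F[T]/(h)$ the paper avoids the minimal-polynomial bookkeeping you mention.
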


\begin{proof}
The $\mathcal O_F$-algebra
\[
\mathcal A=\mathcal O_F[T]/(T^N-c)
\]
is finite and free.  In $\mathcal A$, the class of $T$ is a unit because $T^N=c\in\mathcal O_F^\times$, and $N$ is a unit because $\ell\nmid N$.  Hence the derivative
\[
NT^{N-1}
\]
is a unit in $\mathcal A$, so $\mathcal A$ is finite \'etale over $\mathcal O_F$.  Since $\mathcal O_F$ is henselian, every field factor of $\mathcal A\otimes_{\mathcal O_F}F$ is an unramified extension of $F$.  Evaluation at $w$ selects one of these field factors, which is isomorphic to $F(w)$.
\end{proof}

\begin{proposition}\label{prop:rational-finite}
Under the hypotheses of \cref{thm:main}, if $(a,b)\in\mathbb Q^2\setminus\mathbb Z^2$, then only finitely many rational primes $\ell$ satisfy
\[
\operatorname{tr}\rho(\operatorname{Frob}_\ell)=r.
\]
\end{proposition}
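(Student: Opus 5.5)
The plan is a purely local valuation and ramification argument at a place above $\ell$, carried out on the algebraic quantities supplied by \cref{lem:algebraicity}.

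\textbf{Setup.} Fix one of the finitely many parametrization families; it suffices to show that only finitely many primes of that family have trace $r$. By \cref{prop:split-parametrization}, outside a finite set every such $\ell$ splits as $\ell\mathcal O_K=\mathfrak l\overline{\mathfrak l}$, and $\mathfrak l\mathfrak a_j^{-1}=(\alpha)$. By \cref{lem:algebraicity} we have $U,V\in\overline{\mathbb Q}$ with $U+V=r$ and
\[
U^N=A^Nx^{M_1}y^{M_2},\qquad V^N=B^Nx^{M_2}y^{M_1},
\]
where $x=\alpha/\alpha_0$, $y=c(x)$, and $A,B$ are fixed nonzero algebraic numbers.

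We enlarge the exceptional set so that $\ell\nmid rN$, $\ell$ is unramified in the fixed number field $K(A,B,\mu_N)$, and $A,B,\alpha_0,\mathfrak a_j$ are units at every place above $\ell$. Choose a place $w$ of $\overline{\mathbb Q}$ above $\mathfrak l$, and normalize $v$ so that $v(\ell)=1$. Since $\mathfrak l\ne\overline{\mathfrak l}$, we get $v(x)=v_{\mathfrak l}(\alpha)=1$ and $v(y)=v_{\overline{\mathfrak l}}(\alpha)=0$. Hence
\[
v(U)=M_1/N,\qquad v(V)=M_2/N .
\]
Let $F'$ be the completion of $K(A,B,\mu_N)$ at $w$. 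It is unramified over $K_{\mathfrak l}=\mathbb Q_\ell$, so its value group is $\mathbb Z$.

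\textbf{Case analysis.} Since $v(U+V)=v(r)=0$, the ultrametric inequality leaves three possibilities: $v(U)=v(V)$, or one of $v(U),v(V)$ is $0$ and the other is positive. Both valuations cannot vanish, because $M_1=M_2=0$ contradicts $\gcd(M_1,M_2,N)=1$ with $N>1$.

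\begin{itemize}
\item \emph{Case $M_2=0$.} Then $\gcd(M_1,N)=1$, so $M_1/N\notin\mathbb Z$. Here $V^N=B^Ny^{M_1}$ is a $w$-unit with $\ell\nmid N$, so $F'(V)/F'$ is unramified by \cref{lem:unit-root-unramified}. But $U=r-V\in F'(V)$, and $v(U)=M_1/N$ is not in the value group $\mathbb Z$ of $F'(V)$, a contradiction.
\item \emph{Case $M_1=0$.} This is symmetric to the previous case, with the roles of $U$ and $V$ exchanged.
\item \emph{Case $v(U)=v(V)$.} Then $M_1=M_2=:M$, and $\gcd(M,N)=1$. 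Now $(U/V)^N=(A/B)^N$, so $U=\zeta(A/B)V$ for some $\zeta\in\mu_N$. This gives $V(1+\zeta A/B)=r$. The factor $1+\zeta A/B$ is nonzero, since $r\neq0$. Hence $V\in F'$, whose value group is $\mathbb Z$. This contradicts $v(V)=M/N\notin\mathbb Z$.
\end{itemize}

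Every case is contradictory, so only primes in the enlarged finite exceptional set can satisfy $\operatorname{tr}\rho(\operatorname{Frob}_\ell)=r$. Summing over the finitely many families proves \cref{prop:rational-finite}.

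\textbf{Main obstacle.} The delicate step is the bookkeeping that makes the local valuations exact: $v(x)=1$ and $v(y)=0$ exactly, and $A,B$ are $w$-units. This requires that $\alpha_0$, $\mathfrak a_j$ and the algebraic constants $A,B$, together with their fields of definition, introduce only finitely many bad primes. It also requires that the algebraic representatives from \cref{lem:algebraicity} satisfy the power identities in $\overline{\mathbb Q}$, independently of the $p$-adic embedding.
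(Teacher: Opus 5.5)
Your proof is correct. In the unequal-weight case it is the paper's argument in a slightly more direct form. The paper squeezes $N\le e\le[L_{\mathfrak P}(U):L_{\mathfrak P}]\le N$ to show that $L_{\mathfrak P}(U)$ is totally ramified, and then contradicts the fact that $L_{\mathfrak P}(U)=L_{\mathfrak P}(V)$ is unramified. You instead note that $U=r-V$ lies in an unramified extension of $F'$ but has valuation $M_1/N\notin\mathbb Z$.

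The real difference is the equal-weight case $M_1=M_2=M$.
\begin{itemize}
\item The paper treats it globally. By additivity of the logarithm the trace equation becomes $(A+B)(xy)^{M/N}=r$. Raising to the $N$-th power and using the norm relation $xy=\gamma\ell$ gives $r^N=(A+B)^N(\gamma\ell)^M$, so each family contributes at most one prime.
\item You stay local. The identity $(U/V)^N=(A/B)^N$ puts $V=r/(1+\zeta A/B)$ in the fixed field $K(A,B,\mu_N)$, which is unramified at $\ell$. This contradicts $v(V)=M/N\notin\mathbb Z$.
\end{itemize}

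Your version has three advantages. It makes all three cases uniform, since each ends in a non-integral valuation inside an unramified extension. It needs neither the norm relation nor the branch identity $x^{M/N}y^{M/N}=(xy)^{M/N}$, because the root of unity $\zeta$ absorbs any choice of $N$-th roots. It also rules out every prime outside the exceptional set. The cost is small: you must adjoin $\mu_N$ to the auxiliary number field. The paper's version avoids the auxiliary place in that case and gives an explicit bound of one prime per family.
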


\begin{proof}
Fix a parametrization family.  Suppose first that $M_1=M_2=M$.  Since $N>1$ and $\gcd(M_1,M_2,N)=1$, one has $M\ne0$.  The logarithm is additive on the fixed deep disc, so
\[
x^{M/N}y^{M/N}=(xy)^{M/N}.
\]
Hence the trace equation and \eqref{eq:norm-relation} give
\[
(A+B)(xy)^{M/N}=r,
\qquad
xy=\gamma\ell.
\]
The first identity implies $A+B\ne0$.  Raising to the $N$-th power gives
\begin{equation}\label{eq:equal-weight-norm}
r^N=(A+B)^N(\gamma\ell)^M.
\end{equation}
All factors other than $\ell$ are fixed within the family.  Since $M\ne0$, the map $\ell\mapsto\ell^M$ is injective on positive rational primes, so \eqref{eq:equal-weight-norm} permits at most one $\ell$ in this family.

Assume now that $M_1\ne M_2$. Choose a number field $L\subset\overline{\mathbb Q}$ containing
$K$, $A$, $B$, and the algebraic data of the family.  After enlarging
a finite set $\mathcal S$ of rational primes, we may assume that for
every $\ell\notin\mathcal S$:
\[
\ell\nmid prN,\qquad
L/\mathbb Q\text{ is unramified at }\ell,
\]
all fixed algebraic constants are units at primes above $\ell$, the
fixed fractional ideals are supported away from $\ell$, and $\gamma$
is an $\ell$-adic unit.

Let $\ell\notin\mathcal S$ be a prime in the fixed family.  It splits as
\[
\ell\mathcal O_K=\mathfrak l\overline{\mathfrak l},
\qquad
(\alpha)=\mathfrak l\mathfrak a_j^{-1}.
\]
Choose a prime $\mathfrak P$ of $L$ above the selected $\mathfrak l$, and fix an embedding
\[
\overline{\mathbb Q}\hookrightarrow\overline{L_{\mathfrak P}}
\]
extending the completion embedding of $L$.  Let $v$ be the induced valuation on this algebraic closure, normalized by $v(L_{\mathfrak P}^\times)=\mathbb Z$.  

Because $L/\mathbb Q$ is unramified at $\ell$, the restriction of $v$ to $K$ is the normalized valuation at $\mathfrak l$.  From $(\alpha)=\mathfrak l\mathfrak a_j^{-1}$ and the exclusion of the fixed prime divisors of $\mathfrak a_j$ and $\alpha_0$, we obtain
\[
v(x)=1,
\qquad
v(y)=0.
\]
The fixed algebraic constants are units at $\mathfrak P$, so $v(A)=v(B)=0.$
Using \eqref{eq:UV-algebraic-powers},
\begin{equation}\label{eq:UV-valuations}
v(U)=\frac{M_1}{N},
\qquad
v(V)=\frac{M_2}{N}.
\end{equation}
Since $U+V=r$ and $\ell\nmid r$,
\[
v(U+V)=v(r)=0.
\]
The valuations in \eqref{eq:UV-valuations} are distinct, so
\[
0=\min\left\{\frac{M_1}{N},\frac{M_2}{N}\right\}.
\]
After interchanging $U,V$, we may assume
\[
M_2=0<M_1.
\]
Then $\gcd(M_1,N)=1$.

Let $e=e\bigl(L_{\mathfrak P}(U)/L_{\mathfrak P}\bigr)$. The value group of $L_{\mathfrak P}(U)$ is $\frac1e\mathbb Z$ and contains the reduced fraction $M_1/N$, so $N\mid e$.  On the other hand,
\[
U^N=A^Nx^{M_1}\in L_{\mathfrak P},
\]
whence $[L_{\mathfrak P}(U):L_{\mathfrak P}]\le N$.  Therefore
\[
N\le e\le[L_{\mathfrak P}(U):L_{\mathfrak P}]\le N,
\]
so $L_{\mathfrak P}(U)/L_{\mathfrak P}$ is totally ramified of degree $N>1$.

Since $M_2=0$,
\[
V^N=B^Ny^{M_1}\in\mathcal O_{L_{\mathfrak P}}^\times.
\]
As $\ell\nmid N$, \cref{lem:unit-root-unramified} shows that $L_{\mathfrak P}(V)/L_{\mathfrak P}$ is unramified.  But $V=r-U$, hence
\[
L_{\mathfrak P}(U)=L_{\mathfrak P}(V),
\]
a contradiction.

Thus no primes outside the fixed exceptional set occur when $M_1\ne M_2$, while the equal-weight case contributes at most one prime in each family.  Since there are only finitely many parametrization families, the proposition follows.
\end{proof}

\begin{proof}[Proof of \cref{thm:main}]
Part (i) is \cref{prop:nonrational}, and part (ii) is \cref{prop:rational-finite}.
\end{proof}

\begin{proof}[Proof of \cref{cor:nongeometric}]
The de Rham criterion is \eqref{eq:derham-integral}.  If $\rho$ is not de Rham, then either $(a,b)\notin\mathbb Q^2$ or $(a,b)\in\mathbb Q^2\setminus\mathbb Z^2$, so \cref{thm:main} gives the asserted subpolynomial bound.  Taking any $0<\varepsilon<1/2$ gives
\[
X^\varepsilon=o\!\left(\frac{\sqrt X}{\log X}\right),
\]
and hence the final assertion.
\end{proof}

\begin{remark}[The range of the fixed trace]\label{rem:r-general}
The integrality of $r$ is not essential.  The same proofs give the conclusions of \cref{thm:main} for every fixed $r\in E^\times$.  In the non-rational case, only the condition $r\ne0$ is used.  In the rational-weight case, \cref{lem:algebraicity} shows that every split Frobenius trace lies in
\[
E\cap\iota_p(\overline{\mathbb Q}).
\]
Thus, if $r$ does not lie in this subfield, no prime contributes: inert primes have trace zero and split traces are algebraic.  If $r$ does lie in this subfield, one adjoins its algebraic preimage to the number field used in the proof of \cref{prop:rational-finite} and enlarges the finite exceptional set by the primes at which it is not a unit.  We retain $r\in\mathbb Z\setminus\{0\}$ in the statement in order to match the classical Lang-Trotter formulation.
\end{remark}

\begin{remark}[No irreducibility assumption]\label{rem:no-irreducibility}
No irreducibility hypothesis on $\rho$ is imposed.  In particular, the argument also applies when $\chi$ extends to $G_{\mathbb Q}$ and the induced representation is reducible.
\end{remark}

\begin{remark}\label{rem:further-scope}
The imaginary quadratic hypothesis is used essentially in
\eqref{eq:height-bound}: the norm form is positive definite and gives
$|m|+|n|\ll\sqrt\ell$.  In a real quadratic field, the infinite unit group
prevents this height estimate from following from the norm relation alone.
The present proof also assumes that $p$ is odd, which avoids additional
convergence and torsion bookkeeping for the logarithm, exponential, and the
half-exponents used in Section~\ref{sec:rigidity}.
The integral-weight case $(a,b)\in\mathbb Z^2$ is deliberately excluded
from \cref{thm:main}.  By \eqref{eq:derham-integral} it is the de Rham,
locally algebraic range, where classical CM Lang-Trotter behavior may occur
and neither finiteness nor an $X^\varepsilon$ bound is expected in general.
\end{remark}

\begin{remark}
    In the non-rational range, we expect the stronger general bound
\[
\pi_{\rho,r}(X)=O(1).
\]

\end{remark}

\bibliographystyle{plain}
\bibliography{lang_trotter_quadratic_inductions_copy_ready}

\end{document}